\newtheorem{theorem}{Theorem}[section]
\newtheorem{lemma}[theorem]{Lemma}
\newtheorem{proposition}[theorem]{Proposition}
\newtheorem{corollary}[theorem]{Corollary}
\theoremstyle{definition}
\newtheorem{definition}[theorem]{Definition}
\newtheorem{example}[theorem]{Example}
\newtheorem{remark}[theorem]{Remark}
\numberwithin{equation}{section}
\newcommand{\blankbox}[2]
\begin{document}
\title{Extending structures and classifying complements for left-symmetric algebras}
\author{Yanyong Hong}
\address{College of Science, Zhejiang Agriculture and Forestry University,
Hangzhou, 311300, P.R.China}
\email{hongyanyong2008@yahoo.com}

\subjclass[2010]{17A30, 17D25, 17A60, 18G60}
\keywords{Left-symmetric algebra, Novikov algebra, The extending structures problem, Matched pair, Complements}
\thanks{Project supported by the National Natural Science Foundation of China (No. 11501515), the Zhejiang Provincial Natural Science Foundation of China (No.LQ16A010011), and the Scientific Research Foundation of Zhejiang Agriculture and Forestry University (No.2013FR081)}
\begin{abstract}
 Let $A$ be a left-symmetric (resp. Novikov) algebra, $E$ be a vector space containing $A$ as a subspace and $V$ be a complement of
 $A$ in $E$.
 The extending structures problem which asks for the classification of all left-symmetric (resp. Novikov) algebra structures on
 $E$ such that $A$ is a subalgebra of $E$ is studied. In this paper, the definition of the unified product of left-symmetric (resp. Novikov) algebras is introduced. It is shown that there exists a left-symmetric (resp. Novikov) algebra structure on $E$ such that $A$ is a subalgebra of $E$ if and only if $E$ is isomorphic to a unified product of $A$ and $V$. Two cohomological type objects $\mathcal{H}_A^2(V,A)$ and  $\mathcal{H}^2(V,A)$ are constructed to give a theoretical answer to the extending structures problem. Furthermore, given an extension
 $A\subset E$ of
 left-symmetric (resp. Novikov) algebras, another cohomological type object is constructed to classify
 all complements of $A$ in $E$. Several special examples are provided in details.

\end{abstract}

\maketitle

%\footnote{The second author is the corresponding author.}%
\section{Introduction}
Left-symmetric algebras are a class of Lie-admissible algebras whose commutators are Lie algebras. They arose from the study of affine manifolds and affine structures on Lie
groups \cite{Ko}, convex homogeneous
cones \cite{V},  deformation of associative algebras \cite{G} and so on. Novikov
algebra is a left-symmetric algebra whose right multiplications are commutative. It was essentially stated in \cite{GD} that it corresponds to a certain Hamiltonian operator. Such an algebraic structure also appeared
in \cite{BN} from the point of view of Poisson structures of
hydrodynamic type. The name ``Novikov algebra" was given by Osborn
in \cite{Os}. In \cite{Bu1}, Burde gave a
survey about left-symmetric algebras which showed that they play important roles in many fields in mathematics and mathematical physics such as vector fields, rooted tree algebras, words in two letters, operad theory, vertex algebras, deformation complexes of algebras, convex homogeneous cones, affine manifolds, left-invariant affine structures on Lie groups (see \cite{Bu1} and the references therein).

In this paper, our first aim is to study the following question about left-symmetric (resp. Novikov) algebras:\\
\\
Let $A$ be a left-symmetric (resp. Novikov) algebra and $E$ a vector space containing $A$ as a subspace.
Describe and classify all left-symmetric (resp. Novikov) algebra structures on $E$ such that $A$ is a subalgebra of
$E$.\\
\\
This problem is called \emph{extending structures problem}. The extending structures problems for groups, associative algebras, Hopf algebras, Lie algebras, and Leibniz algebras have been studied in \cite{AM3, AM2, AM5, AM4, AM6} respectively. Since left-symmetric algebras and Novikov algebras are important algebraic structures in mathematics and mathematical physics, it is interesting and meaningful to study the extending structures problems for left-symmetric algebras and Novikov algebras. Certainly, this problem is very difficult. When $A=\{0\}$, this problem is equivalent to
classify all left-symmetric algebras and Novikov algebras on an arbitrary vector space $E$. Of course, when the dimension of $E$ is large,
it is impossible to classify all algebraic structures. In fact, the classifications of left-symmetric algebras in dimension 2 and dimension 3 over $\mathbb{C}$ were obtained in \cite{BM1,Bai1}, and the classifications of Novikov algebras in dimension 2 and dimension 3 over $\mathbb{R}$ and $\mathbb{C}$ were obtained in \cite{BM2,Dd}. But, as far as we know, the complete classifications of left-symmetric algebras (resp. Novikov algebras) whose dimension are larger than 3 have not been obtained so far. There are only some papers on the classification of some special left-symmetric algebras and Novikov algebras of dimensions 4 and 5, such as \cite{DIO,Ki1,Ki2,Dd}. Therefore, we will assume that $A\neq \{0\}$ below. In fact, the extending structures problem generalizes some classic algebraic problems. For example, the extension of the left-symmetric algebra $A$ by an abelian left-symmetric algebra $C$ is characterized by the second cohomology group $H^2(C,A)$ (see \cite{Dz});
the problem which asks for describing and classifying the extensions of the left-symmetric algebra $A$ by another left-symmetric algebra $B$ can be answered by bicrossed products of $A$ and $B$
(see \cite{Bai}) and this problem is called the \emph{factorization problem}. In this paper, our aim is to give a theoretical interpretation of the extending structures problem for left-symmetric (resp. Novikov) algebras using some cohomological type objects and provide some detailed answers to it in certain special cases which depend on the choice of $A$ and its codimension in $E$.

Our second aim is to study the following problem:\\
\\
Let $A\subseteq E$ be an extension of left-symmetric (resp. Novikov) algebras. If there exists a complement of $A$ in $E$, describe and classify all
complements of $A$ in $E$, i.e. subalgebras $B$ of $E$ such that $E=A+B$ and $A\cap B=\{0\}$.
\\
\\
This problem is called \emph{classifying complements problem}. The problems for groups, associative algebras, Hopf algebras, Lie algebras, and Leibniz algebras have been studied in \cite{AM7, A1, AM2, AM6} respectively. The classifying complements problem is the converse of the factorization problem. It is known from \cite{Bai} that if $B$ is an $A$-complement of $E$, then $E$ is isomorphic to a bicrossed product $A\bowtie B$ associated with a matched pair of $A$ and $B$. Then, this problem can reduce to the following question: describe and classify all left-symmetric (Novikov) subalgebras $C$ such that $A\bowtie B\cong A\bowtie C$. By studying this question, we show that given a complement $B$, any
$A$-complements of $E$ can be obtained from $B$ by some deformation map $\varphi: B\rightarrow A$.

This paper is organized as follows. In Section 2, some preliminaries about left-symmetric (resp. Novikov) algebras are recalled. In Section 3, we introduce the definition of unified product $A\natural V$ of left-symmetric (resp. Novikov) algebras associated with an extending datum $\Omega(A, V)=(l_A, r_A, l_V, r_V, f, \cdot)$. The sufficient and necessary condition to ensure that $A\natural V$ with a given canonical product is a left-symmetric (resp. Novikov) algebra is given. Then, we show that there exists a left-symmetric (resp. Novikov) algebra structure on $E$ such that $A$ is a subalgebra of $E$ if and only if $E$ is isomorphic to a unified product of $A$ and $V$ (see Theorem \ref{pt1}). According to this result, we only need to study the following question: when two unified products $A\natural V$ and $A\natural^{'} V$ corresponding to two extending datums $\Omega(A,V)=(l_A,r_A,l_V,r_V,f,\cdot)$ and $\Omega^{'}(A,V)=(l_A^{'},r_A^{'},l_V^{'},r_V^{'},f^{'},\cdot^{'})$ are isomorphic. This question is solved in Lemma \ref{pt2}. By Lemma \ref{pt2}, we construct two cohomological type objects to give a theoretical answer to the extending structures
problem for left-symmetric (resp. Novikov) algebras. The first one $\mathcal{H}_A^2(V,A)$ classifies all left-symmetric (resp. Novikov) algebras on
$E$ up to an isomorphism that stabilizes $A$, while the second object $\mathcal{H}^2(V,A)$ provides the classification of all extending structures
of $A$ to $E$ up to isomorphism which stabilizes $A$ and co-stabilizes $V$. In Section 4, a class of special extending structures of left-symmetric (resp. Novikov) algebras named \emph{flag extending structures} are studied in detail. Several examples for computing $\mathcal{H}_A^2(V,A)$ and
$\mathcal{H}^2(V,A)$ are given. In Section 5, the classifying complements problem for left-symmetric (resp. Novikov) algebras is studied. We show that if $A$ is a left-symmetric (resp. Novikov) algebra and $B$ is a complement of $A$ in $E$, then the isomorphism classes of all $A$-complements of $E$ can be parameterized by a cohomological object  $\mathcal{HC}^2(B,A|(l_A,r_A,l_B,r_B))$, where $l_A$, $r_A$, $l_B$, $r_B$ are defined in the proof of Theorem \ref{pt1}.

Throughout this paper, $k$ is a field. All vector spaces, left-symmetric algebras, Novikov algebras, Lie algebras, linear or bilinear maps are over $k$.

{\bf ACKNOWLEDGMENT} This work was done during the author's visit to TongJi University. He would like to thank Professor Yucai Su for the
hospitality, comments on this work and valuable discussions on Lie algebras.

\section{Preliminaries}
In this section, we will recall some basic definitions and some facts
about left-symmetric algebras and Novikov algebras.

\begin{definition}
A \emph{left-symmetric algebra} is a vector space $A$ together with a bilinear product $\circ: A\times A\rightarrow A$ satisfying
\begin{eqnarray}
(a,b,c)=(b,a,c),~~~~a,~b,~c\in A,\end{eqnarray}
where the associator $(a,b,c)=(a\circ b)\circ c-a\circ (b\circ c)$.

A \emph{Novikov algebra} $(A,\circ)$ is a
left-symmetric algebra with the operation $``\circ"$ satisfying:
\begin{equation}
(a\circ b)\circ c=(a\circ c)\circ b,~~~~~a,~b,~c\in A.
\end{equation}
\end{definition}

Left-symmetric algebras are Lie-admissible algebras (see \cite{Me}).
\begin{proposition}
Let $A$ be a left-symmetric algebra. The commutator
\begin{eqnarray}
[a,b]=a\circ b-b\circ a, ~~~~\forall~~a,~~b\in A,
\end{eqnarray}
defines a Lie algebra $\mathfrak{g}(A)$, which is called the sub-adjacent Lie algebra of $A$ and $A$ is also called the compatible
left-symmetric algebra structure on the Lie algebra $\mathfrak{g}(A)$.
\end{proposition}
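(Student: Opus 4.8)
The plan is to verify the two axioms of a Lie algebra for the bracket $[a,b] = a\circ b - b\circ a$: antisymmetry and the Jacobi identity. Bilinearity of $[\cdot,\cdot]$ is immediate from bilinearity of $\circ$, and antisymmetry $[a,b] = -[b,a]$ is clear by inspection, so the whole content of the statement is the Jacobi identity
\begin{equation*}
[[a,b],c] + [[b,c],a] + [[c,a],b] = 0, \qquad a,b,c \in A.
\end{equation*}

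First I would rewrite everything in terms of the associator $(x,y,z) = (x\circ y)\circ z - x\circ(y\circ z)$, exploiting the defining identity $(x,y,z) = (y,x,z)$ of a left-symmetric algebra. Expanding a single term,
\begin{equation*}
[[a,b],c] = (a\circ b)\circ c - (b\circ a)\circ c - c\circ(a\circ b) + c\circ(b\circ a),
\end{equation*}
and replacing $(a\circ b)\circ c$ by $a\circ(b\circ c) + (a,b,c)$ and $(b\circ a)\circ c$ by $b\circ(a\circ c) + (b,a,c)$, the two associator contributions cancel by left-symmetry, which gives
\begin{equation*}
[[a,b],c] = a\circ(b\circ c) - b\circ(a\circ c) - c\circ(a\circ b) + c\circ(b\circ a).
\end{equation*}

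Then I would add the three cyclic permutations of this expression. Exactly six monomials of the form $x\circ(y\circ z)$ occur, and each appears precisely twice, once with a $+$ sign and once with a $-$ sign, so the sum telescopes to zero; this is the Jacobi identity. The second half of the statement — that $A$ is a ``compatible left-symmetric algebra structure'' on $\mathfrak{g}(A)$ — is merely the terminology recording that $\mathfrak{g}(A)$ and $A$ have the same underlying vector space and that $[a,b]$ is built from $\circ$ exactly as above; no further argument is needed.

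There is no genuine obstacle here: the only point requiring care is the bookkeeping in the cyclic sum, and it is the use of $(x,y,z)=(y,x,z)$ that makes the antisymmetrized associators vanish at the first step. One can alternatively package the computation by noting that left-symmetry is equivalent to $L_{[a,b]} = L_a L_b - L_b L_a$ for the left-multiplication operators $L_a\colon x\mapsto a\circ x$, and then deduce the Jacobi identity from associativity of operator composition, but the direct expansion above is the shortest route.
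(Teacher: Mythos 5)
Your proof is correct: the associator cancellation via $(a,b,c)=(b,a,c)$ and the telescoping of the six terms in the cyclic sum is exactly the standard argument. The paper itself gives no proof of this proposition (it is stated as a known fact with a citation to Medina), so there is nothing to compare against; your direct expansion, or equivalently the observation that $L_{[a,b]}=L_aL_b-L_bL_a$, fully settles the claim.
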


Next, we introduce the definitions of bimodules of left-symmetric algebra and Novikov algebra.

\begin{definition}
Let $A$ be a left-symmetric algebra and $M$ be a vector space. Let $S$, $T: A\rightarrow gl(M)$ be two linear maps.
$M$ (or $(S,T,M)$ ) is called a \emph{bimodule} of $A$ if
\begin{eqnarray}
\label{e1}S(x)S(y)v-S(xy)v=S(y)S(x)v-S(yx)v,\\
\label{e2}S(x)T(y)v-T(y)S(x)v=T(xy)v-T(y)T(x)v,
\end{eqnarray}
for any $x$, $y\in A$ and $v\in M$.

A \emph{bimodule} $M$ of a Novikov algebra $(A,\circ)$ is a vector space with two linear maps $S$, $T: A\rightarrow gl(M)$ satisfying
(\ref{e1}), (\ref{e2}) and
\begin{eqnarray}
S(x\circ y)v=T(y)S(x)v,\\
T(x)T(y)v=T(y)T(x)v,
\end{eqnarray}
for any $x$, $y\in A$ and $v\in M$.
\end{definition}

The following fact is obvious.
\begin{proposition}
Let $(S, T, M)$ be a bimodule of a left-symmetric algebra $A$. Then,
$\rho=S-T$ is a representation of the Lie algebra $\mathfrak{g}(A)$.
\end{proposition}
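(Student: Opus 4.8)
The statement to prove is that $\rho = S - T \colon A \to gl(M)$ is a representation of $\mathfrak{g}(A)$, i.e. $\rho([x,y]) = \rho(x)\rho(y) - \rho(y)\rho(x)$ for all $x,y \in A$, where $[x,y] = xy - yx$. My plan is a direct computation organized around the two bimodule axioms (\ref{e1}) and (\ref{e2}), each rewritten as a commutator identity in $gl(M)$.

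First I would record what each axiom gives. Axiom (\ref{e1}) is precisely $S(x)S(y) - S(y)S(x) = S(xy) - S(yx) = S([x,y])$; in other words $S$ alone is already a representation of $\mathfrak{g}(A)$. Axiom (\ref{e2}) reads $S(x)T(y) - T(y)S(x) = T(xy) - T(y)T(x)$, and interchanging $x$ and $y$ yields the companion identity $S(y)T(x) - T(x)S(y) = T(yx) - T(x)T(y)$; both forms will be needed.

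Next I would expand, using bilinearity of the commutator bracket on $gl(M)$,
\begin{align*}
[\rho(x),\rho(y)] &= [S(x)-T(x),\, S(y)-T(y)] \\
&= [S(x),S(y)] - [S(x),T(y)] - [T(x),S(y)] + [T(x),T(y)].
\end{align*}
Substituting $[S(x),S(y)] = S(xy)-S(yx)$ from (\ref{e1}), $[S(x),T(y)] = T(xy) - T(y)T(x)$ from (\ref{e2}), and $[T(x),S(y)] = -\big(T(yx) - T(x)T(y)\big)$ from the swapped form of (\ref{e2}), while keeping $[T(x),T(y)] = T(x)T(y) - T(y)T(x)$, the four terms that are products of two copies of $T$ cancel in pairs, leaving
\[
[\rho(x),\rho(y)] = S(xy) - S(yx) - T(xy) + T(yx) = S([x,y]) - T([x,y]) = \rho([x,y]),
\]
which is exactly the assertion. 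The computation uses no property of $A$ beyond (\ref{e1}) and (\ref{e2}), so there is no genuine obstacle; the only place demanding attention is the sign bookkeeping — applying (\ref{e2}) with $x$ and $y$ interchanged, correctly distributing the leading minus sign over $[T(x),S(y)]$, and checking that the $T$--$T$ product terms indeed cancel.
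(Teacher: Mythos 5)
Your computation is correct: expanding $[\rho(x),\rho(y)]$ and substituting (\ref{e1}), (\ref{e2}) and its $x\leftrightarrow y$ swap does make the $T$--$T$ terms cancel and yields $\rho([x,y])$. The paper offers no proof at all (it labels the fact obvious), and your direct verification is precisely the intended routine argument.
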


Finally, in order to study the extending structures problem, we need to introduce the following definition.
\begin{definition}
Let $A$ be a left-symmetric (resp. Novikov) algebra, $E$ a vector space such that $A$ is a subspace
of $E$ and $V$ a complement of $A$ in $E$. For a linear map $\varphi: E\rightarrow E$, the following
diagram is considered:
$$\xymatrix{ {A}\ar[d]^{Id}\ar[r]^{i}& {E}\ar[d]^{\varphi}\ar[r]^{\pi} & {V}\ar[d]^{Id} \\
{A}\ar[r]^{i}&{E}\ar[r]^{\pi} &{V} },$$
where $\pi: E\rightarrow V$ is the natural projection of $E=A\oplus V$ onto $V$ and
$i: A\rightarrow E$ is the inclusion map. We say that
$\varphi: E\rightarrow E$ \emph{stabilizes} $A$ (resp. \emph{co-stabilizes} $V$) if the left square
(resp. the right square) of the  above diagram is commutative.

Let $\circ$ and $\circ^{'}$ be two left-symmetric (resp. Novikov) structures on $E$ both containing $A$ as a left-symmetric (resp. Novikov) subalgebra.
If there exists a left-symmetric (resp. Novikov) algebra isomorphism $\varphi: (E, \circ)\rightarrow (E,\circ^{'})$ which stabilizes
$A$,  $\circ$ and $\circ^{'}$ are called \emph{equivalent}, which is denoted by
$(E,\circ)\equiv (E,\circ^{'})$.

If there exists a left-symmetric (resp. Novikov) algebra isomorphism $\varphi: (E, \circ)\rightarrow (E,\circ^{'})$ which stabilizes
$A$ and co-stabilizes $V$,   $\circ$ and $\circ^{'}$ are called \emph{cohomologous}, which is denoted by
$(E,\circ)\approx (E,\circ^{'})$.
\end{definition}

Obviously, $\equiv $ and $\approx $ are equivalence relations on the set of all left-symmetric (resp. Novikov) algebra structures on
$E$ containing $A$ as a left-symmetric (resp. Novikov) subalgebra. Denote by $\text{Extd}(E,A)$ (resp. $\text{Extd}^{'}(E,A)$) the set
of all equivalence classes via $\equiv $ (resp. $\approx $). Thus, $\text{Extd}(E,A)$ is the classifying object of the extending structures problem and $\text{Extd}^{'}(E,A)$ provides a classification of the extending structures problem from the point of the view of the extension problem. In addition, it is easy to see that there exists a canonical projection $\text{Extd}(E,A)\twoheadrightarrow \text{Extd}^{'}(E,A)$.

\section{Unified products for left-symmetric algebras}
In this section, we will introduce the definition of unified product for left-symmetric (resp. Novikov) algebras and give a theoretical answer to the
extending structures problem.
\begin{definition}
Let $(A,\circ)$ be a left-symmetric (resp. Novikov) algebra  and $V$ a vector space. An \emph{extending datum} of $A$ by $V$ is
a system $\Omega(A, V)=(l_A, r_A, l_V, r_V, f, \cdot)$ consisting four linear maps and two bilinear maps as follows:
\begin{eqnarray*}
l_A,~~ r_A: A\rightarrow gl(V),~~~l_V,~~ r_V: V\rightarrow gl(A),~~~f: V\times V\rightarrow A,~~~\cdot: V\times V\rightarrow V.
\end{eqnarray*}
Let $\Omega(A, V)=(l_A, r_A, l_V, r_V, f, \cdot)$ be an extending datum. Denote by $A\natural_{\Omega(A, V)}V=A\natural V$ the vector space
$A\times V$ with the bilinear map $\circ : (A\times V)\times (A\times V)\rightarrow A\times V$ defined by
\begin{gather}\label{e3}
(a,x)\circ (b,y)\nonumber\\
=(a\circ b+l_V(x)b+r_V(y)a+f(x,y), x\cdot y+l_A(a)y+r_A(b)x),
\end{gather}
for all $a$, $b\in A$, $x$, $y\in V$.
$A\natural V$ is called the \emph{unified product} of $A$ and $\Omega(A, V)$ if it is a left-symmetric (resp. Novikov) algebra with the product given by (\ref{e3}). In this case, the extending datum $\Omega(A, V)$ is called a \emph{left-symmetric (resp. Novikov) extending structure} of $A$ by $V$.
We denote by $\mathfrak{T}(A,V)$ the set of all left-symmetric (resp. Novikov) extending structures of $A$ by $V$.
\end{definition}

By (\ref{e3}), the following relations hold in $A\natural V$ for any $a$, $b\in A$, $x$, $y\in V$:
\begin{eqnarray}
(a,0)\circ (b,0)=(a\circ b,0),~~~(a,0)\circ (0,y)=(r_V(y)a,l_A(a)y),\\
(0,x)\circ (b,0)=(l_V(x)b,r_A(b)x),~~~(0,x)\circ (0,y)=(f(x,y),x\cdot y).
\end{eqnarray}

\begin{theorem}\label{t1}
Let $A$ be a left-symmetric algebra, $V$ be a vector space and $\Omega(A,V)$ an extending datum of $A$ by $V$. Then,
$A\natural V$ is a unified product if and only if the following conditions hold for any $a$, $b\in A$, $x$, $y$, $z\in V$:
\begin{eqnarray*}
(L1)&&l_V(x)(a\circ b)=-l_V(l_A(a)x-r_A(a)x)b+(l_V(x)a-r_V(x)a)\circ b\\
&&+r_V(r_A(b)x)a+a\circ(l_V(x)b),\\
(L2)&&l_A(a)r_A(b)x-r_A(b)l_A(a)x=r_A(a\circ b)x-r_A(b)r_A(a)x,\\
(L3)&&r_V(x)(a\circ b-b\circ a)=r_V(l_A(b)x)a-r_V(l_A(a)x)b+a\circ (r_V(x)b)-b\circ (r_V(x)a),\\
(L4)&&l_A(a\circ b-b\circ a)x=l_A(a)l_A(b)x-l_A(b)l_A(a)x,\\
(L5)&&r_V(x\cdot y)a=r_V(y)(r_V(x)a-l_V(x)a)+l_V(x)r_V(y)a\\
&&+f(l_A(a)x,y)+f(x,l_A(a)y)-a\circ f(x,y)-f(r_A(a)x,y),\\
(L6)&&l_A(a)(x\cdot y)=-l_A(l_V(x)a-r_V(x)a)y+(l_A(a)x-r_A(a)x)\cdot y\\
&&+r_A(r_V(y)a)x+x\cdot(l_A(a)y),\\
(L7)&&l_V(x\cdot y-y\cdot x)a=(l_V(x)l_V(y)-l_V(y)l_V(x))a\\
&&-(f(x,y)-f(y,x))\circ a+f(x,r_A(a)y)-f(y,r_A(a)x),\\
(L8)&&r_A(a)(x\cdot y-y\cdot x)=r_A(l_V(y)a)x-r_A(l_V(x)a)y\\
&&+x\cdot(r_A(a)y)-y\cdot(r_A(a)x),\end{eqnarray*}
\begin{eqnarray*}
(L9)&&f(x\cdot y,z)-f(x,y\cdot z)-f(y\cdot x,z)+f(y,x\cdot z)+r_V(z)(f(x,y)-f(y,x))\\
&&-l_V(x)f(y,z)+l_V(y)f(x,z)=0,\\
(L10)&&(x\cdot y)\cdot z-x\cdot (y\cdot z)-(y\cdot x)\cdot z+y\cdot (x\cdot z)\\
&&+l_A(f(x,y)-f(y,x))z-r_A(f(y,z))x+r_A(f(x,z))y=0.
\end{eqnarray*}
\end{theorem}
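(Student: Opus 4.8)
The plan is to unwind the left-symmetry axiom for the bilinear map (\ref{e3}) on $A\natural V=A\times V$ and to read off its $A$- and $V$-components. Write $(u,v,w)=(u\circ v)\circ w-u\circ(v\circ w)$ for the associator and $\delta(u,v,w)=(u,v,w)-(v,u,w)$; since the product (\ref{e3}) is bilinear, $\delta$ is $k$-trilinear, and since $A\natural V=(A\times 0)\oplus(0\times V)$ as a vector space, $\delta$ vanishes identically if and only if it vanishes on all triples each of whose entries is of the form $(a,0)$ or $(0,x)$ --- the nontrivial direction being: expand $u=(a,0)+(0,x)$, $v=(b,0)+(0,y)$, $w=(c,0)+(0,z)$ and use trilinearity. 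So it is enough to check the $2^{3}$ ``pure'' cases. To do so I would first record the products needed, read off from (\ref{e3}) using bilinearity of $f$ and $\cdot$ and linearity of $l_V,r_V$ (so that $f(0,v)=f(v,0)=0$, $0\cdot v=v\cdot 0=0$, $l_V(0)=r_V(0)=0$ for all $v\in V$): the four listed right after (\ref{e3}), namely $(a,0)\circ(b,0)=(a\circ b,0)$, $(a,0)\circ(0,y)=(r_V(y)a,l_A(a)y)$, $(0,x)\circ(b,0)=(l_V(x)b,r_A(b)x)$, $(0,x)\circ(0,y)=(f(x,y),x\cdot y)$, together with the mixed ones $(a,0)\circ(c,w)=(a\circ c+r_V(w)a,l_A(a)w)$, $(0,x)\circ(c,w)=(l_V(x)c+f(x,w),x\cdot w+r_A(c)x)$, $(c,w)\circ(b,0)=(c\circ b+l_V(w)b,r_A(b)w)$ and $(c,w)\circ(0,y)=(r_V(y)c+f(w,y),w\cdot y+l_A(c)y)$.

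Next, since $\delta$ is symmetric in its first two arguments, the eight pure cases collapse to six, of which $(A,A,A)$ is precisely the left-symmetry of $(A,\circ)$ and holds by hypothesis; the pairs $\{(A,V,A),(V,A,A)\}$ and $\{(A,V,V),(V,A,V)\}$ each give one identity, leaving five genuine cases: $(V,A,A)$, $(A,A,V)$, $(A,V,V)$, $(V,V,A)$, $(V,V,V)$. For each of these I would compute the two associators in $\delta(u,v,w)=0$ from the products above and split off the $A$-coordinate and the $V$-coordinate. Carrying out this bookkeeping yields exactly the ten relations claimed: $(V,A,A)$ gives (L1) ($A$-part) and (L2) ($V$-part); $(A,A,V)$ gives (L3) and (L4); $(A,V,V)$ gives (L5) and (L6); $(V,V,A)$ gives (L7) and (L8); and $(V,V,V)$ gives (L9) and (L10). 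For instance, in case $(A,A,V)$ one computes $((a,0),(b,0),(0,z))=(r_V(z)(a\circ b)-a\circ(r_V(z)b)-r_V(l_A(b)z)a,\,l_A(a\circ b)z-l_A(a)l_A(b)z)$; subtracting the same expression with $a,b$ interchanged and rearranging the first coordinate is precisely (L3), and the second coordinate is (L4).

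There is no conceptual obstacle here; the work is entirely in the expansions, and the points requiring care are not losing the terms that vanish because $0\in V$, keeping the two coordinates strictly separate, and tracking signs when $a\leftrightarrow b$ or $x\leftrightarrow y$ is applied to the right-hand side of $\delta=0$. The reduction to pure triples together with the first-two-argument symmetry of $\delta$ is what keeps the number of identities down to ten. For the converse one simply reverses the bookkeeping: if (L1)--(L10) hold, then $\delta$ vanishes on every pure triple, hence on all of $A\natural V$ by trilinearity, so (\ref{e3}) is left-symmetric and $A\natural V$ is a unified product. This establishes the equivalence.
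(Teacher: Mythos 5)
Your proposal is correct and follows essentially the same route as the paper: reduce the left-symmetry condition for the product (\ref{e3}) to the pure triples with entries in $A\times 0$ or $0\times V$, use the (anti)symmetry of $\delta$ in its first two arguments to cut the cases down to five nontrivial ones, and read off the $A$- and $V$-components to get (L1)--(L10), with the same case-to-condition matching as in the paper. (Only note that $\delta$ is antisymmetric, not symmetric, in its first two slots, which is all the collapse argument needs.)
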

\begin{proof}
Define
\begin{eqnarray*}
R((a,x),(b,y),(c,z))=((a,x),(b,y),(c,z))-((b,y),(a,x),(c,z)),
\end{eqnarray*}
where $(a,x)$, $(b,y)$, $(c,z)\in A\times V$.
Note that $A\natural V$ is a left-symmetric algebra if and only if $R((a,x),(b,y),(c,z))=0$
for all $a$, $b$, $c\in A$ and $x$, $y$, $z\in V$.

Obviously, $R((a,x),(b,y),(c,z))=0$ holds if and only if $R((a,0),(b,0),(c,0))=0$, $R((0,x),(b,0),(c,0))=R((a,0),(0,y),(c,0))=0$,
$R((a,0),(b,0),(0,z))=0$, $R((a,0),(0,y),(0,z))=R((0,x),(b,0),(0,z))=0$, $R((0,x),(0,y),(c,0))=0 $ and $R((0,x),(0,y),(0,z))=0$
are satisfied for all $a$, $b$, $c\in A$ and $x$, $y$, $z\in V$.
By a direct computation, it is easy to see that for any $a$, $b$, $c\in A$ and $x$, $y$, $z\in V$,
$R((a,0),(b,0),(c,0))=0$ is equivalent to that $A$ is a left-symmetric algebra;
$R((0,x),(b,0),(c,0))=0$ $\Leftrightarrow$ $R((a,0),(0,y),(c,0))=0$ $\Leftrightarrow$ $R((0,x),(a,0),(b,0))=0$ $\Leftrightarrow$
(L1) and (L2) hold;
$R((a,0),(b,0),(0,z))=0$ $\Leftrightarrow$ $ R((a,0),(b,0),(0,x))=0 $ $\Leftrightarrow$ (L3) and (L4) hold;
$R((a,0),(0,y),(0,z))=0$ $\Leftrightarrow$ $R((0,x),(b,0),(0,z))=0$ $\Leftrightarrow$ $ R((a,0),(0,x),(0,y))=0$ $\Leftrightarrow$
(L5) and (L6) hold;
$R((0,x),(0,y),(c,0))=0 $ $\Leftrightarrow$ $ R((0,x),(0,y),(a,0))=0$ $\Leftrightarrow$ (L7) and (L8) hold;
$R((0,x),(0,y),(0,z))=0 $ $\Leftrightarrow$  (L9) and (L10) hold.

Thus, this theorem is obtained.

\end{proof}
\begin{remark}
In fact, (L2) and (L4) mean that $(l_A,r_A,V)$ is a bimodule of $A$. Moreover, (L1), (L3), (L6) and (L8) are exactly the compatible conditions defining
a matched pair of left-symmetric algebras (see Theorem 3.5 in \cite{Bai}).
\end{remark}
\begin{corollary}
Let $\Omega(A, V)=(l_A, r_A, l_V, r_V, f, \cdot)$ be a left-symmetric extending structure of $A$ by $V$.
Define $\lhd: V\otimes\mathfrak{ g}(A)\rightarrow V$ by $x \lhd a=l_A(a)x-r_A(a)x$,
$\rhd: V\otimes \mathfrak{g}(A)\rightarrow \mathfrak{g}(A)$ by $x\rhd a=l_V(x)a-r_V(x)a$,
$g: V\otimes V\rightarrow \mathfrak{g}(A)$ by $g(x,y)=f(x,y)-f(y,x)$ and
$\{\cdot,\cdot\}: V\otimes V\rightarrow V$ by $\{x,y\}=x\cdot y-y\cdot x$
for all $a\in A$, $x$, $y\in V$. Then,
$\Omega(\mathfrak{g}(A), V)=(\lhd, \rhd, f,\{\cdot,\cdot\})$ is a Lie extending structure of $\mathfrak{g}(A)$ by $V$ (see Definition 3.1 in \cite{AM4}).

\end{corollary}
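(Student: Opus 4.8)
The plan is to deduce the corollary from Theorem~\ref{t1} by passing to sub-adjacent Lie algebras, rather than by checking the axioms of a Lie extending structure from \cite{AM4} by hand. The point of departure is that, since $\Omega(A,V)$ is a left-symmetric extending structure, $A\natural V$ is a genuine left-symmetric algebra, so by the proposition on sub-adjacent Lie algebras its commutator $\mathfrak g(A\natural V)$ is a genuine Lie algebra. Moreover the relations recorded just after (\ref{e3}) show that $A\times\{0\}$ is a subalgebra of $A\natural V$ isomorphic to $A$, hence $\mathfrak g(A)\cong\mathfrak g(A\times\{0\})$ is a Lie subalgebra of $\mathfrak g(A\natural V)$ with $\{0\}\times V$ as a complement.

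First I would compute the commutator on the vector space $A\times V$. Using (\ref{e3}) one obtains, for all $a,b\in A$ and $x,y\in V$,
\begin{align*}
[(a,x),(b,y)]&=(a,x)\circ(b,y)-(b,y)\circ(a,x)\\
&=\bigl([a,b]+x\rhd b-y\rhd a+g(x,y),\ \{x,y\}+y\lhd a-x\lhd b\bigr),
\end{align*}
where $[a,b]=a\circ b-b\circ a$ is the bracket of $\mathfrak g(A)$ and $\lhd,\rhd,g,\{\cdot,\cdot\}$ are the maps introduced in the statement ($g$ being the antisymmetrization of $f$). The substance of this step is to recognize the right-hand side as the canonical bracket of the Lie unified product $\mathfrak g(A)\natural V$ attached to the extending datum $(\lhd,\rhd,g,\{\cdot,\cdot\})$, normalized as in \cite{AM4}.

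The conclusion is then immediate: by the construction of Lie unified products in \cite{AM4}, an extending datum of $\mathfrak g(A)$ by $V$ is a Lie extending structure precisely when its canonical unified product is a Lie algebra. Since the previous step identifies that unified product with the Lie algebra $\mathfrak g(A\natural V)$, the datum $(\lhd,\rhd,g,\{\cdot,\cdot\})$ is a Lie extending structure of $\mathfrak g(A)$ by $V$, as asserted.

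I expect the only genuine obstacle to be bookkeeping: one must check that the displayed commutator agrees, term by term and with the correct signs, with the canonical bracket of the Lie unified product exactly as normalized in \cite{AM4}. Here it helps that (L2) and (L4) say $(l_A,r_A,V)$ is a bimodule of $A$, so that $x\lhd a=l_A(a)x-r_A(a)x$ really does define a representation of $\mathfrak g(A)$ on $V$, while $g$ and $\{\cdot,\cdot\}$ are antisymmetric by construction. A self-contained alternative, avoiding the correspondence results of \cite{AM4} altogether, is to verify each defining condition of a Lie extending structure directly from Theorem~\ref{t1}: every such condition arises by antisymmetrizing the relevant relations among (L1)--(L10), e.g.\ the Jacobi-type identity for $(g,\{\cdot,\cdot\})$ from (L9) and (L10), the compatibility of $\rhd$ with brackets from (L1) and (L3), that of $\lhd$ from (L2) and (L4), and the remaining mixed conditions from (L5)--(L8). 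This route is longer but entirely mechanical.
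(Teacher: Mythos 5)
Your proposal is correct and takes essentially the same route as the paper: the paper's proof likewise identifies the commutator Lie algebra $\mathfrak{g}(A\natural V)$ with the Lie unified product of $\mathfrak{g}(A)$ and $V$ attached to the datum $(\lhd,\rhd,g,\{\cdot,\cdot\})$ and then invokes the correspondence of \cite{AM4}, exactly as you do. Your displayed commutator, with second component $\{x,y\}+y\lhd a-x\lhd b$, is in fact the correct direct computation from (\ref{e3}) (the paper's displayed bracket mis-pairs the $\lhd$ terms), so the only remaining point is the sign normalization against the conventions of \cite{AM4}, which you already flag as bookkeeping.
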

\begin{proof}
This conclusion can be obtained directly from the relation between left-symmetric algebra $A\natural V$ and its
sub-adjacent Lie algebra. In fact, the sub-adjacent Lie algebra $\mathfrak{g}(A\natural V)$ is just the Lie algebra obtained from
the Lie extending structure $\Omega(\mathfrak{g}(A), V)=(\lhd, \rhd, g,\{\cdot,\cdot\})$:
\begin{eqnarray*}
[(a,x),(b,y)]=([a,b]+x\rhd b-y\rhd a+g(x,y), \{x,y\}+x\lhd a-y\lhd b)
\end{eqnarray*}
for all $a$, $b\in A$, $x$, $y\in V$.

\end{proof}
\begin{corollary}\label{co1}
Let $A$ be a Novikov algebra, $V$ be a vector space and $\Omega(A,V)$ an extending datum of $A$ by $V$. Then,
$A\natural V$ is a unified product of Novikov algebras if and only if (L1)-(L10) and the following conditions hold for any $a$, $b\in A$, $x$, $y$, $z\in V$:
\begin{eqnarray*}
(N1)~~&&(l_V(x)a)\circ b+l_V(r_A(a)x)b=(l_V(x)b)\circ a+l_V(r_A(b)x)a,\\
(N2)~~&&r_A(b)r_A(a)x=r_A(a)r_A(b)x,\\
(N3)~~&&(r_V(x)a)\circ b+l_V(l_A(a)x)b=r_V(x)(a\circ b),\\
(N4)~~&&r_A(b)l_A(a)x=l_A(a\circ b)x,\\
(N5)~~&&r_V(y)r_V(x) a+f(l_A(a)x,y)=r_V(x)r_V(y) a+f(l_A(a)y,x),\\
(N6)~~&&l_A(r_V(x)a)y+(l_A(a)x)\cdot y=l_A(r_V(y)a)x+(l_A(a)y)\cdot x,\\
(N7)~~&&r_V(y)(l_V(x)a)+f(r_A(a)x,y)=f(x,y)\circ a+l_V(x\cdot y)a,\\
(N8)~~&&l_A(l_V(x)a)y+(r_A(a)x)\cdot y=r_A(a)(x\cdot y),\\
(N9)~~&&r_V(z)f(x,y)+f(x\cdot y,z)=r_V(y)f(x,z)+f(x\cdot z,y),\\
(N10)~~&&l_A(f(x,y))z+(x\cdot y)\cdot z=l_A(f(x,z))y+(x\cdot z)\cdot y.
\end{eqnarray*}
\end{corollary}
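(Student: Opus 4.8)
The plan is to use the defining relationship between Novikov and left-symmetric algebras: a Novikov algebra is exactly a left-symmetric algebra in which, in addition, $(u\circ v)\circ w=(u\circ w)\circ v$ for all $u,v,w$. Since $A$ is a Novikov algebra by hypothesis, Theorem~\ref{t1} already tells us that $A\natural V$ is a left-symmetric algebra if and only if (L1)--(L10) hold. Hence it suffices to prove that, for any extending datum, the Novikov identity holds on $A\natural V$ if and only if (N1)--(N10) hold; note that this last equivalence will not use (L1)--(L10) at all, so the two sets of conditions can simply be imposed simultaneously at the end.

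First I would introduce the Novikov defect
\[
N\bigl((a,x),(b,y),(c,z)\bigr)=\bigl((a,x)\circ(b,y)\bigr)\circ(c,z)-\bigl((a,x)\circ(c,z)\bigr)\circ(b,y),\qquad a,b,c\in A,\ x,y,z\in V,
\]
so that $A\natural V$ satisfies the Novikov identity precisely when $N$ vanishes identically. As $N$ is $k$-trilinear, $N\equiv 0$ is equivalent to the vanishing of $N$ on all $2^3$ triples whose entries lie in $A\times\{0\}$ or in $\{0\}\times V$; and since $N$ is antisymmetric in its last two arguments, the entry-type pattern $(A,A,V)$ gives the same condition as $(A,V,A)$, and $(V,A,V)$ the same as $(V,V,A)$, leaving six essentially different cases. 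The all-$A$ case $N\bigl((a,0),(b,0),(c,0)\bigr)=0$ amounts to $(a\circ b)\circ c=(a\circ c)\circ b$ in $A$, which holds because $A$ is Novikov.

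For each of the five remaining cases I would expand using the product~(\ref{e3}) (or the special cases of it recorded right afterwards), using that every structure map of the extending datum returns $0$ once a zero argument is inserted, and then read off the $A$-component and the $V$-component of the resulting identity separately. A routine computation should give: $N\bigl((0,x),(b,0),(c,0)\bigr)=0\Leftrightarrow$ (N1) and (N2); $N\bigl((a,0),(0,y),(c,0)\bigr)=0\Leftrightarrow$ (N3) and (N4); $N\bigl((a,0),(0,y),(0,z)\bigr)=0\Leftrightarrow$ (N5) and (N6); $N\bigl((0,x),(b,0),(0,z)\bigr)=0\Leftrightarrow$ (N7) and (N8); and $N\bigl((0,x),(0,y),(0,z)\bigr)=0\Leftrightarrow$ (N9) and (N10). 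Combined with Theorem~\ref{t1}, this yields the corollary, since $A\natural V$ is a Novikov algebra exactly when it is a left-symmetric algebra obeying the Novikov identity.

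The individual computations are nothing more than elementary bilinear bookkeeping, so the one place that needs care — the main obstacle — is organizing the case analysis so that no case is omitted or double-counted. Unlike the left-symmetric defect in Theorem~\ref{t1}, which is symmetric under interchanging its first two arguments, the Novikov defect is antisymmetric under interchanging its last two, which is why the reduction runs $2^3\to 6\to 5$ pairs of conditions. A handy consistency check during the expansion is that the $A$-valued equations (N1), (N3), (N5), (N7), (N9) should emerge as the $A$-components, and the $V$-valued equations (N2), (N4), (N6), (N8), (N10) as the $V$-components, of the five defects listed above.
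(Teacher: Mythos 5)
Your proposal is correct and follows essentially the same route as the paper: invoke Theorem~\ref{t1} for (L1)--(L10) and then check that the additional Novikov identity $((a,x)\circ(b,y))\circ(c,z)=((a,x)\circ(c,z))\circ(b,y)$ on $A\natural V$ is equivalent to (N1)--(N10) by evaluating it on the components $A\times\{0\}$ and $\{0\}\times V$. The only difference is that you spell out the case analysis (the $2^3\to 6\to 5$ reduction via antisymmetry in the last two arguments, with the all-$A$ case absorbed by $A$ being Novikov, and the correct pairing of each mixed case with the corresponding pair of conditions), which the paper leaves as ``similar to the proof of Theorem~\ref{t1}, it is easy to check.''
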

\begin{proof}
According to Theorem \ref{t1}, $A\natural V$ is a Novikov algebra if and only if (L1)-(L10) hold and
\begin{eqnarray}\label{eq1}
((a,x)\cdot (b,y))\cdot (c,z)=((a,x)\cdot (c,z))\cdot (b,y),
\end{eqnarray}for all $a$, $b$, $c\in A$ and $x$, $y$, $z\in V$.
Then, similar to that in the proof of Theorem \ref{t1}, it is easy to check that
(\ref{eq1}) holds if and only if (N1)-(N10) are satisfied. Thus, we obtain this corollary.

\end{proof}
\begin{remark}
In fact, (L2), (L4), (N2) and (N4) means that $(l_A,r_A,V)$ is a bimodule of the Novikov algebra $A$.
\end{remark}
\begin{example}
Let $\Omega(A, V)=(l_A, r_A, l_V, r_V, f, \cdot)$ be an extending datum of a left-symmetric algebra $A$ by a vector space $V$ where
$l_A$, $r_A$, $l_V$, $r_V$ are trivial maps. Then, $\Omega(A, V)=(f, \cdot)$ is a left-symmetric extending structure of $A$ by $V$ if and only if $(V,\cdot)$ is a left-symmetric algebra and $f: V\times V\rightarrow A$ satisfies
\begin{eqnarray}\label{eqnn1}
&&a\circ f(x,y)=(f(x,y)-f(y,x))\circ a=0,\\
\label{eqnn2}&&f(x\cdot y,z)-f(x,y\cdot z)-f(y\cdot x,z)+f(y,x\cdot z)=0,
\end{eqnarray}
for all $a\in A$, $x$, $y$, $z\in V$. Moreover, if $A$ is a Novikov algebra, $\Omega(A, V)=(f, \cdot)$ is a Novikov extending structure of $A$ by $V$ if and only if $(V,\cdot)$ is a Novikov algebra and $f: V\times V\rightarrow A$ satisfies  (\ref{eqnn2}) and
\begin{eqnarray*}
&&a\circ f(x,y)=f(x,y)\circ a=0,\\
&&f(x\cdot y,z)=f(x\cdot z,y), ~~~~x,~y,~z\in V.
\end{eqnarray*}
The associated unified product $A\natural V$ denoted by $A\natural_f V$ is called the \emph{twisted product} of $A$ and $V$.
The product on $A\natural_f V$ is given by for any $a$, $b\in A$, $x$, $y\in V$:
$$(a,x)\circ (b,y)=(a\circ b+f(x,y),x\cdot y).$$
\end{example}

\begin{example}\label{ex1}
Let $\Omega(A, V)=(l_A, r_A, l_V, r_V, f, \cdot)$ be an extending datum of a left-symmetric algebra $A$ by a vector space $V$ where
$l_A$ and $r_A$ are trivial maps. Then, $\Omega(A, V)=(l_V, r_V, f, \cdot)$ is a left-symmetric extending structure of $A$ by $V$ if and only if $(V,\cdot)$ is a left-symmetric algebra and the following conditions are satisfied:
\begin{eqnarray*}
(C1)&&l_V(x)(a\circ b)=(l_V(x)a-r_V(x)a)\circ b+a\circ(l_V(x)b),\\
(C2)&&r_V(x)(a\circ b-b\circ a)=a\circ (r_V(x)b)-b\circ (r_V(x)a),\\
(C3)&&r_V(x\cdot y)a=r_V(y)(r_V(x)a-l_V(x)a)+l_V(x)r_V(y)a-a\circ f(x,y),\\
(C4)&&l_V(x\cdot y-y\cdot x)a=(l_V(x)l_V(y)-l_V(y)l_V(x))a-(f(x,y)-f(y,x))\circ a,\\
(C5)&&f(x\cdot y,z)-f(x,y\cdot z)-f(y\cdot x,z)+f(y,x\cdot z)+r_V(z)(f(x,y)-f(y,x))\\
&&-l_V(x)f(y,z)+l_V(y)f(x,z)=0,
\end{eqnarray*}
for all $a$, $b\in A$ and $x$, $y$, $z\in V$. Moreover, if $A$ is a Novikov algebra, $\Omega(A, V)=(l_V, r_V, f, \cdot)$ is a Novikov extending structure of $A$ by $V$ if and only if $(V,\cdot)$ is a Novikov algebra and $f: V\times V\rightarrow A$ satisfies (C1)-(C5) and
\begin{eqnarray*}
(CN1)~~&&(l_V(x)a)\circ b=(l_V(x)b)\circ a,\\
(CN2)~~&&(r_V(x)a)\circ b=r_V(x)(a\circ b),\\
(CN3)~~&&r_V(y)r_V(x) a=r_V(x)r_V(y) a,\\
(CN4)~~&&r_V(y)(l_V(x)a)=f(x,y)\circ a+l_V(x\cdot y)a,\\
(CN5)~~&&r_V(z)f(x,y)+f(x\cdot y,z)=r_V(y)f(x,z)+f(x\cdot z,y),
\end{eqnarray*}
for all $a$, $b\in A$ and $x$, $y$, $z\in V$. The associated unified product $A\natural V$ denoted by $A\natural_{l_V,r_V}^f V$ is called the \emph{crossed product} of $A$ and $V$.
The product on $A\natural_{l_V,r_V}^f V$ is given by for any $a$, $b\in A$, $x$, $y\in V$:
$$(a,x)\circ (b,y)=(a\circ b+l_V(x)b+r_V(y)a+f(x,y),x\cdot y).$$
Obviously, $A$ is an ideal of $A\natural_{l_V,r_V}^f V$.
\end{example}
\begin{example}\label{ex2}
Let $\Omega(A, V)=(l_A, r_A, l_V, r_V, f, \cdot)$ be an extending datum of a left-symmetric algebra $A$ by a vector space $V$ where
$f$ is a trivial map. Then, $\Omega(A, V)=(l_A, r_A, l_V, r_V, f)$ is a left-symmetric extending structure of $A$ by $V$ if and only if
$(V,\cdot)$ is a left-symmetric algebra, $(l_A, r_A, V)$ is a bimodule of $A$, $(l_V,r_V, A)$ is a bimodule of $V$ and they satisfy (L1), (L3),
(L6) and (L8). Moreover, if $A$ is a Novikov algebra, $\Omega(A, V)=(l_V, r_V, f, \cdot)$ is a Novikov extending structure of $A$ by $V$ if and only if $(V,\cdot)$ is a Novikov algebra, $(l_A, r_A, V)$ is a bimodule of the Novikov algebra $A$, $(l_V,r_V, A)$ is a bimodule of the Novikov algebra  $V$, and
they satisfy (L1), (L3), (L6), (L8), (N1), (N3), (N6) and (N8). The associated unified product $A\natural V$ denoted by $A\bowtie_{l_V,r_V}^{l_A,r_A} V$ is called the \emph{bicrossed product} of $A$ and $V$ (see \cite{Bai}). The product on  $A\bowtie_{l_V,r_V}^{l_A,r_A} V$ is given by for any $a$, $b\in A$, $x$, $y\in V$:
$$(a,x)\circ (b,y)=(a\circ b+l_V(x)b+r_V(y)a,x\cdot y+l_A(a)y+r_A(b)x).$$
Here, both $A$ and $V$ are subalgebras of $A\bowtie_{l_V,r_V}^{l_A,r_A} V$.
\end{example}

Given a left-symmetric (resp. Novikov) extending structure $\Omega(A,V)=(l_A,r_A,$ $l_V,r_V,f,\cdot)$. It is obvious that
$A$ can be seen a left-symmetric (resp.Novikov) subalgebra of $A\natural V$. In fact, we can also prove that any left-symmetric (resp. Novikov)
algebra structure on a vector space $E$ containing $A$ as a subalgebra is isomorphic to a unified product.

\begin{theorem}\label{pt1}
Let $A$ be a left-symmetric (resp. Novikov) algebra, $E$ a vector space containing $A$ as a subspace and $\circ$
a left-symmetric (resp. Novikov)  algebra structure on $E$ such that
$A$ is a subalgebra of $(E,\circ)$. Then, there exists a left-symmetric (resp. Novikov)  extending structure $\Omega(A,V)=(l_A,r_A,l_V,r_V,f,\cdot)$ of $A$ by a subspace $V$ of $E$ and an isomorphism of left-symmetric (resp. Novikov) algebras
$(E,\circ)\cong A\natural V$ which stabilizes $A$ and co-stabilizes $V$.
\end{theorem}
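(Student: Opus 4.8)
The plan is to build the extending datum $\Omega(A,V)$ directly from the algebra structure $\circ$ on $E$ by decomposing products according to the direct sum $E = A \oplus V$. First I would fix a linear complement $V$ of $A$ in $E$, so that every element of $E$ is uniquely written as $a + x$ with $a \in A$, $x \in V$. Let $p : E \to A$ and $q : E \to V$ be the associated projections. Then for $a \in A$ and $x \in V$ I define
\begin{eqnarray*}
l_V(x)a := p(x \circ a), & & r_A(a)x := q(b \circ x)\big|_{b=a} = q(a \circ x), \\
r_V(y)a := p(a \circ y), & & l_A(a)y := q(a \circ y), \\
f(x,y) := p(x \circ y), & & x \cdot y := q(x \circ y),
\end{eqnarray*}
for all $a \in A$, $x,y \in V$ (with the remaining map $l_A$, $r_A$ read off in the evident way). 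Because $A$ is a subalgebra, $a \circ b \in A$, so the product of two elements of $A$ has no $V$-component; this is exactly what makes the formula (\ref{e3}) match $\circ$ on the nose. The point is then that the $k$-linear bijection $\psi : A \natural V \to E$, $\psi(a,x) = a + x$, transports $\circ$ on $E$ to the bilinear map (\ref{e3}) on $A \times V$: one checks $\psi((a,x) \circ (b,y)) = (a+x)\circ(b+y)$ by expanding the right-hand side into its four pieces $a\circ b$, $a \circ y$, $x \circ b$, $x \circ y$ and projecting each onto $A$ and $V$.

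Next I would observe that since $\psi$ is a linear isomorphism intertwining the two bilinear maps, and $(E,\circ)$ is a left-symmetric (resp. Novikov) algebra, the transported structure $A \natural V$ is automatically a left-symmetric (resp. Novikov) algebra. Hence, by Theorem \ref{t1} (resp. Corollary \ref{co1}), the datum $\Omega(A,V)$ just constructed satisfies (L1)--(L10) (resp. and (N1)--(N10)), i.e. it is genuinely a left-symmetric (resp. Novikov) extending structure. So there is no need to verify those axioms by hand — they come for free from the fact that $\psi$ is an isomorphism of not-necessarily-associative algebras.

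Finally I would check that $\psi$ stabilizes $A$ and co-stabilizes $V$. For stabilization: restricting $\psi$ to $A = A \times \{0\}$ gives $\psi(a,0) = a$, so $\psi \circ i = i$ where $i$ is the inclusion of $A$ into $A\natural V$ on one side and into $E$ on the other; thus the left square commutes. For co-stabilization: the projection $\pi : E \to V$ is exactly $q$, and $\pi(\psi(a,x)) = q(a+x) = x$, which equals the canonical projection $A \natural V \to V$; so the right square commutes.

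The only genuinely substantive step is the verification $\psi((a,x)\circ(b,y)) = \psi(a,x)\circ_E \psi(b,y)$, and even that is a routine bookkeeping of the four cross-terms using $E = A\oplus V$ together with the hypothesis that $A$ is a subalgebra; there is no real obstacle. The mild subtlety worth stating explicitly is that $V$ is merely a vector-space complement, not a subalgebra, which is precisely why $f$ and the $l_V, r_V$ terms are needed — the $V$-factor $(V,\cdot)$ need not be closed under $\circ$ in $E$, and $x \circ y$ for $x,y \in V$ genuinely has an $A$-component, namely $f(x,y)$.
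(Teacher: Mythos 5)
Your proposal is correct and follows essentially the same route as the paper: define the extending datum by projecting the four cross-products $a\circ b$, $a\circ y$, $x\circ b$, $x\circ y$ onto $A$ and $V$, transport $\circ$ through the linear bijection $\psi(a,x)=a+x$, and then invoke Theorem \ref{t1} (resp. Corollary \ref{co1}) so that (L1)--(L10) (resp. (N1)--(N10)) hold automatically, with stabilization and co-stabilization read off from $\psi$. One slip to correct: as displayed, your $r_A(a)x:=q(a\circ x)$ coincides with $l_A(a)x$; it must be $r_A(a)x:=q(x\circ a)$, the $V$-component of $x\circ a$, matching the paper's $r_A(a)v:=v\circ a-p(v\circ a)$, after which your bookkeeping of the cross-terms goes through exactly as in the paper's proof.
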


\begin{proof}
Note that there is a natural linear map $p: E\rightarrow A$ such that $p(a)=a$ for all $a\in A$.
Set $V=\text{Ker}(p)$ which is a complement of $A$ in $E$. Then, we present a extending datum $\Omega(A,V)=(l_A,r_A,l_V,r_V,f,\cdot)$ of $A$ by a subspace $V$ of $E$ defined as follows:
\begin{eqnarray*}
&&l_A: A\rightarrow gl(V), ~~l_A(a)v:=a\circ v-p(a\circ v),\\
&& r_A: A\rightarrow gl(V),~~r_A(a)v:=v\circ a-p(v\circ a),\\
&&l_V: V\rightarrow gl(A),~~l_V(v)a:=p(v\circ a),~~~~r_V: V\rightarrow gl(A),~~r_V(v)a:=p(a\circ v),\\
&&f:V\times V\rightarrow A,~~f(x,y):=p(x\circ y),~~\cdot: V\times V\rightarrow V,~~x\cdot y:=x\circ y-p(x\circ y),
\end{eqnarray*}
for any $a\in A$, $x$, $y\in V$. It is easy to see that $\varphi: A\times V\rightarrow E$ defined as
$\varphi(a,x)=a+x$ is a linear isomorphism, whose inverse is as follows: $\varphi^{-1}(e):=(p(e),e-p(e))$ for all $e\in E$.
Next, we should prove that $\Omega(A,V)=(l_A,r_A,l_V,r_V,f,\cdot)$ is a left-symmetric (resp. Novikov) extending structure of $A$ by $V$ and $\varphi: A \natural V \rightarrow E$ is an isomorphism of left-symmetric (resp. Novikov) algebras that stabilizes $A$ and co-stabilizes $V$. In fact, if $\varphi: A\times V\rightarrow E$ is an isomorphism of left-symmetric (resp. Novikov) algebras, there exists
a unique left-symmetric (resp. Novikov) product given by
\begin{eqnarray}\label{ee1}
(a,x)\circ (b,y)=\varphi^{-1}(\varphi(a,x)\circ \varphi(b,y))
\end{eqnarray}
for all $a$, $b\in A$ and $x$, $y\in V$. Therefore, for completing the proof, we only need to check that the product defined by
(\ref{ee1}) is just the one given by (\ref{e3}) related with the above extending system $\Omega(A,V)=(l_A,r_A,l_V,r_V,f,\cdot)$.
In detail, we get
\begin{eqnarray*}
(a,x)\circ (b,y)&=&\varphi^{-1}(\varphi(a,x)\circ \varphi(b,y))=\varphi^{-1}((a+x)\circ (b+y))\\
&=&\varphi^{-1}(a\circ b+a\circ y+x\circ b+x\circ y)\\
&=&(a\circ b+p(a\circ y)+p(x\circ b)+p(x\circ y),\\
&&a\circ y+x\circ b+x\circ y-p(a\circ y)-p(x\circ b)-p(x\circ y))\\
&=&(a\circ b+l_v(x)b+r_v(y)a+f(x,y), x\cdot y+l_A(a)y+r_A(b)x),
\end{eqnarray*}
for all $a$, $b\in A$ and $x$, $y\in V$.
Therefore, $\varphi: A\natural V\rightarrow E$ is an isomorphism of left-symmetric (resp. Novikov) algebras and
the following diagram is commutative
$$\xymatrix{ {A}\ar[d]^{Id}\ar[r]^{i}& {A\natural V}\ar[d]^{\varphi}\ar[r]^{\rho} & {V}\ar[d]^{Id} \\
{A}\ar[r]^{i}&{E}\ar[r]^{\pi} &{V} },$$
where $\rho:A\natural V\rightarrow V$ and $\pi: E\rightarrow V$ are the natural projections.
Then, the proof is finished.
\end{proof}
\begin{remark}\label{rr1}
By Example \ref{ex1} and Theorem \ref{pt1}, it is easy to obtain the following result:\\
\emph{Let $A$ be a left-symmetric (resp. Novikov) algebra, $E$ a vector space containing $A$ as a subspace. Then, any left-symmetric (resp. Novikov) algebra
structure on $E$ containing $A$ as an ideal is isomorphic to a crossed product of left-symmetric (resp. Novikov) algebras $A\natural_{l_V,r_V}^f V$.}
\\

Similarly, by Example \ref{ex2} and Theorem \ref{pt1}, we can get (This result can also be referred to Theorem 3.5 in \cite{Bai}):\\
\emph{Let $A$ and $B$ be two left-symmetric (resp. Novikov) algebras. Then, any left-symmetric (resp. Novikov) algebra
structure on $E$ which is a direct sum of the underlying vector of two subalgebras $A$ and $B$ is isomorphic to a bicrossed product of left-symmetric (resp. Novikov) algebras $A\bowtie_{l_V,r_V}^{l_A,r_A} B$.}

\end{remark}

Next, by Theorem \ref{pt1}, for classifying all left-symmetric (resp. Novikov) algebra structures on $E$ containing $A$ as a subalgebra,
we only need to classify all unified products $A\natural V$ associated to all left-symmetric (resp. Novikov) algebra structures
$\Omega(A,V)=(l_A,r_A,l_V,r_V,f,\cdot)$ for a given complement $V$ of $A$ in $E$.

\begin{lemma}\label{pt2}
Let $\Omega(A,V)=(l_A,r_A,l_V,r_V,f,\cdot)$ and $\Omega^{'}(A,V)=(l_A^{'},r_A^{'},l_V^{'},r_V^{'},f^{'},\cdot^{'})$ be two
left-symmetric (resp. Novikov) extending structures of $A$ by $V$ and $A\natural V$, $A\natural^{'} V$ be the corresponding
unified products. Then, there is a bijection between the set of all morphisms of left-symmetric (resp. Novikov) algebras
$\varphi: A\natural V\rightarrow A\natural^{'} V$ which stabilizes $A$ and the set of pairs $(\lambda,\mu)$, where
$\lambda: V\rightarrow A$, $\mu: V\rightarrow V$ are linear maps satisfying the following conditions
\begin{gather}
\label{er1}r_V(x)a+\lambda(l_A(a)x)=a\circ \lambda(x)+r_V^{'}(\mu(x))a,\\
\label{er2}\mu(l_A(a)x)=l_A^{'}(a)\mu(x),\\
\label{er3}l_V(x)a+\lambda(r_A(a)x)=\lambda(x)\circ a+l_V^{'}(\mu(x))a,\\
\label{er4}\mu(r_A(a)x)=r_A^{'}(a)\mu(x),\\
f(x,y)+\lambda(x\cdot y)=\lambda(x)\circ \lambda(y)+l_V^{'}(\mu(x))\lambda(y)\nonumber\\
\label{er5}+r_V^{'}(\mu(y))\lambda(x)+f^{'}(\mu(x),\mu(y)),\\
\label{er6}\mu(x\cdot y)=\mu(x)\cdot^{'}\mu(y)+l_A^{'}(\lambda(x))\mu(y)+r_A^{'}(\lambda(y))\mu(x),
\end{gather}
for any $a\in A$, $x$, $y\in V$.\\

The bijection from the set of all morphisms of left-symmetric (resp. Novikov) algebras
$\varphi_{\lambda,\mu}: A\natural V\rightarrow A\natural^{'} V$ to the set of pairs $(\lambda,\mu)$ is given as follows:
\begin{eqnarray*}
\varphi(a,x)=(a+\lambda(x),\mu(x)).
\end{eqnarray*}
In addition, $\varphi_{\lambda,\mu}$ is an isomorphism if and only if $\mu: V\rightarrow V$ is an isomorphism, and
$\varphi_{\lambda,\mu}$ costabilizes $V$ if and only if $\mu= Id_V$.
\end{lemma}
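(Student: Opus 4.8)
The plan is to prove both directions of the claimed bijection by a direct but organized computation. First I would show that any linear map $\varphi: A\natural V\to A\natural^{'}V$ that stabilizes $A$ must have the stated form. Since $\varphi$ stabilizes $A$, the left square of the diagram commutes, so $\varphi(a,0)=(a,0)$ for all $a\in A$. Writing $\varphi(0,x)=(\lambda(x),\mu(x))$ for linear maps $\lambda: V\to A$ and $\mu: V\to V$ (these are linear because $\varphi$ is), linearity of $\varphi$ gives $\varphi(a,x)=(a+\lambda(x),\mu(x))$. Conversely, any pair $(\lambda,\mu)$ of linear maps defines such a $\varphi_{\lambda,\mu}$, and it is automatic that $\varphi_{\lambda,\mu}$ stabilizes $A$. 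So the content is to determine exactly when $\varphi_{\lambda,\mu}$ is a homomorphism of left-symmetric (resp. Novikov) algebras.

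Next I would impose the multiplicativity condition $\varphi_{\lambda,\mu}\bigl((a,x)\circ(b,y)\bigr)=\varphi_{\lambda,\mu}(a,x)\circ^{'}\varphi_{\lambda,\mu}(b,y)$ and expand both sides using the explicit product formula \eqref{e3} for $A\natural V$ and $A\natural^{'}V$. As in the proof of Theorem \ref{t1}, the full condition for all $(a,x),(b,y)$ is equivalent to its specializations on the four ``pure'' types of pairs: $(a,0)\circ(b,0)$, $(a,0)\circ(0,y)$, $(0,x)\circ(b,0)$, and $(0,x)\circ(0,y)$, because both the product and $\varphi_{\lambda,\mu}$ are bilinear/linear and split along $A\oplus V$. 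The first case $(a,0)\circ(b,0)$ is automatic since $\varphi_{\lambda,\mu}$ restricts to the identity on $A$ and $A$ is a subalgebra in both structures. The case $(a,0)\circ(0,y)$ produces, upon comparing the $A$-component and the $V$-component separately, exactly \eqref{er1} and \eqref{er2}; the case $(0,x)\circ(b,0)$ gives \eqref{er3} and \eqref{er4}; and the case $(0,x)\circ(0,y)$ gives \eqref{er5} and \eqref{er6}. In the Novikov setting no new relations appear: once $\varphi_{\lambda,\mu}$ is a homomorphism of the underlying left-symmetric algebras and both $A\natural V$, $A\natural^{'}V$ are Novikov algebras, $\varphi_{\lambda,\mu}$ automatically respects the extra identity, so the same six conditions characterize the Novikov-algebra morphisms.

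Finally I would address the last two assertions. If $\mu$ is bijective, then $\varphi_{\lambda,\mu}$ is bijective with inverse $(a,x)\mapsto(a-\lambda(\mu^{-1}(x)),\mu^{-1}(x))$, which one checks is again of the form $\varphi_{\lambda^{'},\mu^{'}}$ and hence (being a set-theoretic inverse of a homomorphism) a homomorphism; conversely, if $\varphi_{\lambda,\mu}$ is bijective then, reading off the $V$-component, $\mu$ must be bijective. For the co-stabilizing statement, $\varphi_{\lambda,\mu}$ co-stabilizes $V$ means the right square of the diagram commutes, i.e. $\rho'\circ\varphi_{\lambda,\mu}=\pi$ on $A\natural V$, which unravels to $\mu(x)=x$ for all $x$, that is $\mu=\mathrm{Id}_V$; the converse is immediate. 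The main obstacle is purely bookkeeping: correctly expanding the products with all six maps $l_A,r_A,l_V,r_V,f,\cdot$ present on both sides and matching $A$- and $V$-components without sign or argument errors — the structure of the argument is routine, but the computation in the mixed cases $(a,0)\circ(0,y)$ and especially $(0,x)\circ(0,y)$ must be done carefully to land exactly on \eqref{er1}--\eqref{er6}.
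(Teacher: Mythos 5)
Your proposal is correct and follows essentially the same route as the paper: reduce to $\varphi(a,x)=(a+\lambda(x),\mu(x))$ via the stabilizing condition, check multiplicativity on the pure pairs $(a,0),(b,0)$; $(a,0),(0,y)$; $(0,x),(b,0)$; $(0,x),(0,y)$ to obtain (\ref{er1})--(\ref{er6}), and then handle bijectivity via the explicit inverse $(a,x)\mapsto(a-\lambda(\mu^{-1}(x)),\mu^{-1}(x))$ and the co-stabilizing condition by reading off the $V$-component. No substantive differences from the paper's argument.
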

\begin{proof}
Let $\varphi: A\natural V\rightarrow A\natural^{'} V$ be a morphism of left-symmetric (resp. Novikov) algebras.
Since $\varphi$ stabilizes $A$, $\varphi(a,0)=(a,0)$. Moreover, we can set $\varphi(0,x)=(\lambda(x),\mu(x))$, where
$\lambda: V\rightarrow A$, $\mu: V\rightarrow V$ are two linear maps. Therefore,
we get $\varphi(a,x)=(a+\lambda(x),\mu(x))$. Then, we should prove that
$\varphi$ is a morphism of left-symmetric (resp. Novikov) algebras if and only if (\ref{er1})-(\ref{er6}) hold.
It is enough to check that
\begin{eqnarray}
\label {er7}\varphi((a,x)\circ (b,y))=\varphi(a,x)\circ \varphi(b,y)
\end{eqnarray} holds for all generators of $A\natural V$.
Obviously, (\ref{er7}) holds for the pair $(a,0)$, $(b,0)$.
Then, we consider (\ref{er7}) for the pair $(a,0)$, $(0,x)$.
According to \begin{eqnarray*}
\varphi((a,0)\circ (0,x))&=&\varphi(r_V(x)a,l_A(a)x)\\
&=&(r_V(x)a+\lambda(l_A(a)x),\mu(l_A(a)x),
\end{eqnarray*}
and
\begin{eqnarray*}
\varphi(a,0)\circ \varphi(0,x)&=&(a,0)\circ (\lambda(x),\mu(x))\\
&=&(a\circ \lambda(x)+r_V^{'}(\mu(x))a,l_A^{'}(a)\mu(x)),
\end{eqnarray*}
we get that (\ref{er7}) holds for the pair $(a,0)$, $(0,x)$ if and only if (\ref{er1}) and (\ref{er2}) hold.
Similarly, it is easy to check that (\ref{er7}) holds for the pair $(0,x)$, $(a,0)$ if and only if (\ref{er3}) and (\ref{er4}) hold;
(\ref{er7}) holds for the pair $(0,x)$, $(0,y)$ if and only if (\ref{er5}) and (\ref{er6}) hold.

Assume that $\varphi_{\lambda,\mu}$ is bijective. It is obvious that $\mu$ is surjective. Then, we only need to prove that
$\mu$ is injective. Let $x\in V$ such that $\mu(x)=0$. Then, we get
$\varphi_{\lambda,\mu}(-\lambda(x),x)=(-\lambda(x)+\lambda(x),\mu(x))=(0,0)$. Thus, $x=0$, i.e. $\mu$ is injective.
Conversely, assume that $\mu:V\rightarrow V$ is bijective. Then, $\varphi_{\lambda,\mu}$ has the inverse given by
$\varphi_{\lambda,\mu}^{-1}(a,x)=(a-\lambda(\mu^{-1}(x)),\mu^{-1}(x))$. Thus, by the first part, $\varphi_{\lambda,\mu}$
is an isomorphism. Therefore, $\varphi_{\lambda,\mu}$ is an isomorphism if and only if $\mu: V\rightarrow V$ is an isomorphism.
Finally, it is obvious that $\varphi_{\lambda,\mu}$ costabilizes $V$ if and only if $\mu= Id_V$.

Now, the proof is finished.

\end{proof}

\begin{definition}
If there exists a pair of linear maps $(\lambda,\mu)$ where $\lambda: V\rightarrow A$, $\mu\in Aut_k(V)$ such that
the left-symmetric (resp. Novikov) extending structure $\Omega(A,V)=(l_A,r_A,l_V,r_V,f,\cdot)$ can be obtained from
another corresponding extending structure  $\Omega^{'}(A,V)=(l_A^{'},r_A^{'},l_V^{'},r_V^{'},f^{'},\cdot^{'})$ using
$(\lambda,\mu)$ as follows:
\begin{eqnarray}
&&r_A(a)x=\mu^{-1}(r_A^{'}(a)\mu(x)),\\
&&l_A(a)x=\mu^{-1}(l_A^{'}(a)\mu(x)),\\
&&l_V(x)a=\lambda(x)\circ a+l_V^{'}(\mu(x))a-\lambda(r_A(a)x),\\
&&r_V(x)a=a\circ \lambda(x)+r_V^{'}(\mu(x))a-\lambda(l_A(a)x),\\
&&x\cdot y=\mu^{-1}(\mu(x)\cdot^{'}\mu(y))+\mu^{-1}(l_A^{'}(\lambda(x))\mu(y))+\mu^{-1}(r_A^{'}(\lambda(y))\mu(x)),\\
&&f(x,y)=\lambda(x)\circ \lambda(y)+l_V^{'}(\mu(x))\lambda(y)\\
&&+r_V^{'}(\mu(y))\lambda(x)+f^{'}(\mu(x),\mu(y))-\lambda(x\cdot y),\nonumber
\end{eqnarray}
for any $a\in A$, $x$, $y\in V$, then $\Omega(A,V)$ and  $\Omega^{'}(A,V)$ are called \emph{equivalent} and we denote it
by $\Omega(A,V)\equiv \Omega^{'}(A,V)$.

Moreover, in special, $\mu=Id$, $\Omega(A,V)$ and  $\Omega^{'}(A,V)$ are called \emph{cohomologous} and we denote it
by $\Omega(A,V)\approx \Omega^{'}(A,V)$.

\end{definition}

Then, by the above discussion, the answer for the extending structures problem of left-symmetric algebras (resp. Novikov algebras) is given as follows:
\begin{theorem}\label{th1}
Let $A$ be a left-symmetric algebra (resp. Novikov algebra), $E$ a vector space that contains $A$ as a subspace and $V$ a complement
of $A$ in $E$. Then, we get:\\
(1)Denote $\mathcal{H}_A^2(V,A):=\mathfrak{T}(A,V)/\equiv$. Then, the map
\begin{eqnarray}
\mathcal{H}_A^2(V,A)\rightarrow Extd(E,A),~~~~\overline{\Omega(A,V)}\rightarrow (A\natural V,\circ)
\end{eqnarray}
is bijective, where $\overline{\Omega(A,V)}$ is the equivalence class of $\Omega(A,V)$ under $\equiv$.\\
(2) Denote $\mathcal{H}^2(V,A):=\mathfrak{T}(A,V)/\approx$. Then, the map
\begin{eqnarray}
\mathcal{H}^2(V,A)\rightarrow Extd^{'}(E,A),~~~~\overline{\overline{\Omega(A,V)}}\rightarrow (A\natural V,\circ)
\end{eqnarray}
is bijective, where $\overline{\overline{\Omega(A,V)}}$ is the equivalence class of $\Omega(A,V)$ under $\approx$.\\
\end{theorem}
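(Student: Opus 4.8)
The plan is to deduce both statements from Theorem \ref{pt1} and Lemma \ref{pt2}, which together already carry all the structural content; the remaining work in Theorem \ref{th1} is to repackage this as bijections, so the argument is essentially bookkeeping about the two equivalence relations. Before starting I would record the small preliminary that $\equiv$ and $\approx$ are genuine equivalence relations on $\mathfrak{T}(A,V)$: reflexivity comes from the pair $(\lambda,\mu)=(0,\mathrm{Id}_V)$, symmetry from $(-\lambda\circ\mu^{-1},\mu^{-1})$, and transitivity from composing pairs; this is implicit in the correspondence of Lemma \ref{pt2} with isomorphisms but is cleanest stated on its own, so that the quotients $\mathcal{H}_A^2(V,A)$ and $\mathcal{H}^2(V,A)$ are legitimate.

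Next I would fix once and for all the canonical linear isomorphism $\gamma\colon A\natural V\to E$, $\gamma(a,x)=a+x$, coming from the decomposition $E=A\oplus V$. Transporting the product of any unified product along $\gamma$ gives a left-symmetric (resp. Novikov) algebra structure on $E$ in which $A$ is a subalgebra, and $\gamma$ itself stabilizes $A$ and co-stabilizes $V$ in the sense of the diagram in Section 2. Thus $\Omega(A,V)\mapsto(E,\text{transported product})$ is a well-defined assignment $\mathfrak{T}(A,V)\to\{\text{structures on }E\}$, and I must show it descends to the stated bijections. For the $\equiv$ case: if $\Omega(A,V)\equiv\Omega'(A,V)$ via $(\lambda,\mu)$ with $\mu\in Aut_k(V)$, then the six defining formulas of $\equiv$ are, after rearranging (using that $\mu$ is invertible to solve for $l_A,r_A$ from the formulas corresponding to \eqref{er2} and \eqref{er4}, for $l_V,r_V$ from those corresponding to \eqref{er1} and \eqref{er3}, and for $f,\cdot$ from those corresponding to \eqref{er5}, \eqref{er6}), precisely equations \eqref{er1}--\eqref{er6}; hence by Lemma \ref{pt2} the map $\varphi_{\lambda,\mu}(a,x)=(a+\lambda(x),\mu(x))$ is an isomorphism $A\natural V\to A\natural'V$ stabilizing $A$, and conjugating by $\gamma$ shows the two transported structures on $E$ lie in the same $\equiv$-class of $\text{Extd}(E,A)$. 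Conversely, an $\equiv$-equivalence of the transported structures on $E$, conjugated back by $\gamma^{-1}$, is an isomorphism $A\natural V\to A\natural'V$ stabilizing $A$, which Lemma \ref{pt2} identifies with some $\varphi_{\lambda,\mu}$, $\mu\in Aut_k(V)$, satisfying \eqref{er1}--\eqref{er6}; reading these backwards gives $\Omega(A,V)\equiv\Omega'(A,V)$. This yields simultaneously well-definedness and injectivity of the map in (1). Surjectivity is exactly Theorem \ref{pt1}: given any left-symmetric (resp. Novikov) structure $\circ$ on $E$ with $A$ a subalgebra, take $p\colon E\to A$ to be the projection along $V$; the construction there produces an $\Omega(A,V)$ with $A\natural V\cong(E,\circ)$ via a map stabilizing $A$ (indeed co-stabilizing $V$), so $\overline{\Omega(A,V)}$ hits the class of $\circ$. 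Part (2) is the same argument verbatim, replacing ``$\mu\in Aut_k(V)$'' by ``$\mu=\mathrm{Id}_V$'' everywhere and invoking the last clause of Lemma \ref{pt2} that $\varphi_{\lambda,\mu}$ co-stabilizes $V$ iff $\mu=\mathrm{Id}_V$; one also checks the two bijections are compatible with the canonical projections $\mathcal{H}_A^2(V,A)\twoheadrightarrow\mathcal{H}^2(V,A)$ and $\text{Extd}(E,A)\twoheadrightarrow\text{Extd}^{'}(E,A)$.

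I do not expect a deep obstacle here, since the heavy lifting was front-loaded into Theorem \ref{pt1} (every structure is a unified product) and Lemma \ref{pt2} (morphisms stabilizing $A$ are the $\varphi_{\lambda,\mu}$). The one place demanding care is the verification that the six identities defining $\equiv$ are genuinely equivalent, under invertibility of $\mu$, to the system \eqref{er1}--\eqref{er6} — i.e. that the rearrangement is not only formal but also produces, from a valid $\Omega'(A,V)$ and an admissible $(\lambda,\mu)$, a datum $\Omega(A,V)$ lying again in $\mathfrak{T}(A,V)$; this last point is automatic because $\varphi_{\lambda,\mu}$ transports the left-symmetric (resp. Novikov) axioms, but it should be stated explicitly so that ``$\equiv$'' is well-defined as a relation on $\mathfrak{T}(A,V)$ rather than on arbitrary extending data.
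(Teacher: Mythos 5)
Your proposal is correct and follows exactly the route the paper takes: its proof of this theorem is a one-line citation of Theorem \ref{pt1} and Lemma \ref{pt2} (together with Theorem \ref{t1} and Corollary \ref{co1}), and your write-up simply spells out the bookkeeping — transport along $\gamma(a,x)=a+x$, matching the defining formulas of $\equiv$ and $\approx$ with \eqref{er1}--\eqref{er6}, and surjectivity from Theorem \ref{pt1} — that the paper leaves implicit.
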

\begin{proof}
This theorem can be directly obtained from Theorem \ref{t1}, Corollary \ref{co1}, Theorem \ref{pt1} and Lemma \ref{pt2}.
\end{proof}
\section{Flag extending structures of left-symmetric algebras}
In this section, we will study a class of special extending structures of left-symmetric (resp. Novikov) algebras in detail.
\begin{definition}\label{def1}
Let $A$ be a left-symmetric (resp. Novikov) algebra and $E$ a vector space containing $A$ as a subspace.
If there exists a finite chain of left-symmetric (resp. Novikov) subalgebras of $E$:
$A=E_0\subset E_1\subset \cdots \subset E_m=E$ such that $E_i$ has codimension 1 in $E_{i+1}$ for all
$i=0$, $\cdots$, $m-1$, then the left-symmetric (resp. Novikov) algebra structure on
$E$ is called a \emph{flag extending structure } of $A$.
\end{definition}

It is easy to see that $\text{dim}_k(V)=m$ in Definition \ref{def1}, where $V$ is the complement of $A$ in $E$. In fact,
all flag extending structures of $A$ in $E$ can be completely described by a recursive process. First,
classify all unified products of $A\natural V_1$ where $V_1$ is a 1-dimensional vector space.
Second, replacing $A$ by $A\natural V_1$, characterize all unified products of $(A\natural V_1)\natural V_2$ where $V_2$ is a 1-dimensional vector space. Then, we can iterate this process. After $m$ steps, we can describe and classify all flag extending structures
of $A$ in $E$. Therefore, in this section, we mainly study the extending structure of $A$ by a 1-dimensional vector space $V$.

\begin{definition}\label{dd1}
Let $A$ be a left-symmetric algebra. A \emph{flag datum} of $A$ is a 6-tuple $(h,g,D,T,a_0, \alpha)$ where $a_0\in A$,
$\alpha\in k$, $g: A\rightarrow k$ is a morphism of algebras, $h: A\rightarrow k$, $D$ and $T: A\rightarrow A$ are linear maps satisfying
for $a$, $b\in A$:
\begin{eqnarray}
&&\label{es1}h(a\circ b)=h(b\circ a),\\
&&\label{es2}D(a\circ b)=D(a)\circ b+a\circ D(b)+(g(a)-h(a))D(b)+g(b)T(a)-T(a)\circ b,\\
&&\label{es3}T(a\circ b)-T(b\circ a)=h(b)T(a)-h(a)T(b)+a\circ T(b)-b\circ T(a),\\
&&\label{es4}T^2(a)=T(D(a))-D(T(a))+a\circ a_0+(g(a)-2h(a))a_0+\alpha T(a),\\
&&\label{es5}h(D(a))-h(T(a))=g(T(a))+\alpha(h(a)-g(a)).
\end{eqnarray}
Denote by $\mathcal{FL}(A)$ the set of all flag datums of $A$.
\end{definition}

\begin{definition}
Let $A$ be a Novikov algebra. A \emph{flag datum} of $A$ is a 6-tuple $(h,g,D,T,a_0, \alpha)$ where $a_0\in A$,
$\alpha\in k$, $g: A\rightarrow k$ is a morphism of algebras, $h: A\rightarrow k$, $D$ and $T: A\rightarrow A$ are linear maps satisfying
(\ref{es1})-(\ref{es5}) and the following equalities( $a$, $b\in A$):
\begin{eqnarray}
&&\label{df1}D(a)\circ b+g(a)D(b)=D(b)\circ a+g(b)D(a),\\
&&\label{df2}T(a\circ b)=T(a)\circ b+h(a)D(b),\\
&&\label{df3}h(a\circ b)=h(a)g(b),\\
&&\label{df4}T(D(a))=a_0\circ a+\alpha D(a)-g(a)a_0,\\
&&\label{df5}h(D(a))=0.
\end{eqnarray}
Denote by $\mathcal{FN}(A)$ the set of all flag datums of the Novikov algebra $A$.
\end{definition}

\begin{proposition}\label{4p}
Let $A$ be a left-symmetric (resp. Novikov) algebra  and $V$ a vector space of dimension 1 with a basis $\{x\}$. Then, there exists a bijection between the set $\mathfrak{T}(A,V)$ of all left-symmetric (resp. Novikov) extending structures of $A$ by $V$ and $\mathcal{FL}(A)$ (resp. $\mathcal{FN}(A)$).

Through the above bijection, the left-symmetric (resp. Novikov) extending structure $\Omega(A,V)=(l_A,r_A,l_V,r_V,f,\cdot)$ corresponding
to $(h,g,D,T,a_0, \alpha)\in \mathcal{FL}(A)$ (resp. $\in \mathcal{FN}(A)$) is given as follows:
\begin{eqnarray}
&&l_A(a)x=h(a)x,~~~r_A(a)x=g(a)x,~~l_V(x)a=D(a),\\
&&r_V(x)a=T(a),~~~x\cdot x=\alpha x,~~~f(x,x)=a_0,
\end{eqnarray}
for all $a\in A$.
\end{proposition}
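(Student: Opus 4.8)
The plan is to unwind the defining conditions of a left-symmetric (resp. Novikov) extending structure $\Omega(A,V) = (l_A, r_A, l_V, r_V, f, \cdot)$ in the special case $\dim_k V = 1$, and to observe that each piece of data is forced to take the stated scalar-or-endomorphism form once we fix the basis vector $x$. Concretely, since $V = kx$, the linear maps $l_A(a), r_A(a) \in gl(V)$ are multiplication by scalars, so there are unique linear functionals $h, g : A \to k$ with $l_A(a)x = h(a)x$ and $r_A(a)x = g(a)x$; similarly $x\cdot x = \alpha x$ for a unique $\alpha \in k$, and $f(x,x) = a_0 \in A$ is a single element. The maps $l_V(x), r_V(x) \in gl(A)$ are arbitrary linear endomorphisms of $A$, which we name $D := l_V(x)$ and $T := r_V(x)$. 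This assignment $\Omega(A,V) \mapsto (h,g,D,T,a_0,\alpha)$ is clearly a bijection between all extending data of $A$ by $V$ and all $6$-tuples of this type; what remains is to check that $\Omega(A,V)$ satisfies the axioms (L1)--(L10) (resp. also (N1)--(N10)) if and only if $(h,g,D,T,a_0,\alpha)$ satisfies (\ref{es1})--(\ref{es5}) (resp. also (\ref{df1})--(\ref{df5})).

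So the core of the proof is to substitute $y = z = x$ into each of (L1)--(L10) — these are the only nontrivial instances since $V$ is spanned by $x$ — and translate. I expect: (L2) and (L4), which by the Remark after Theorem \ref{t1} say $(l_A, r_A, V)$ is a bimodule of $A$, collapse to the single scalar identity $g(a\circ b) = g(a)g(b)$ together with $g(a\circ b) - g(b\circ a) = 0$ once one also uses (L4); since $g(a\circ b) = g(a)g(b)$ already, (L4) forces nothing new beyond what the multiplicativity gives, so $g$ is a morphism of algebras $A \to k$. The condition (L3) with $x = y$ becomes, after cancelling, the symmetry $h(a\circ b) = h(b\circ a)$, i.e. (\ref{es1}); and (L4) with the scalar identifications is automatically satisfied. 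Then (L1) rewrites as (\ref{es2}), (L3) (the genuinely two-sided part) as (\ref{es3}), (L5) as (\ref{es4}), (L7) as (\ref{es5}), and (L8), (L6) turn out to be either vacuous (both sides antisymmetric in a single variable vanish) or to reproduce earlier relations. Finally (L9) and (L10) involve $f(x\cdot y, z)$-type terms; with $x=y=z$ these also reduce to consequences of (\ref{es2})--(\ref{es5}) or become trivial because the expressions are antisymmetrizations evaluated on equal arguments. For the Novikov case one repeats the bookkeeping with (N1)--(N10): substituting $x=y=z$ and using the scalar forms of $l_A, r_A$, the conditions (N1)--(N10) collapse precisely to (\ref{df1})--(\ref{df5}) (several of the ten becoming redundant by the $l_A, r_A$-bimodule-of-Novikov-algebra remark and by antisymmetry).

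The main obstacle — really the only one — is the purely mechanical but error-prone task of checking that no axiom among (L1)--(L10), (N1)--(N10) produces an extra constraint beyond (\ref{es1})--(\ref{es5}), (\ref{df1})--(\ref{df5}): one must verify that each of the ``missing'' relations is either a formal antisymmetric expression vanishing on a one-dimensional space or a linear combination of the retained ones. I would organize this by noting at the outset that every condition antisymmetric in two of the $V$-slots (e.g. the combinations appearing in (L3), (L7), (L8), (L9), (L10) built from $x\cdot y - y\cdot x$ and $f(x,y)-f(y,x)$) automatically holds when all $V$-arguments are equal, which immediately disposes of a large fraction of the cases, and then treat the remaining genuinely symmetric conditions (L1), (L2), (L5), (L6) one at a time. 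Conversely, given a flag datum, one defines $\Omega(A,V)$ by the displayed formulas and runs the same substitutions in reverse to confirm (L1)--(L10) (and (N1)--(N10)) hold, which shows the correspondence lands in $\mathfrak{T}(A,V)$ and is inverse to the first map. This completes the bijection.
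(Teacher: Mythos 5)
Your approach is exactly the paper's: parametrize the extending datum by $(h,g,D,T,a_0,\alpha)$ using $\dim_k V=1$ and then check that (L1)--(L10) (resp.\ plus (N1)--(N10)) translate into ``$g$ is an algebra morphism'' together with (\ref{es1})--(\ref{es5}) (resp.\ (\ref{df1})--(\ref{df5})); the paper's proof is just this ``straightforward computation''. However, several of your predicted attributions are swapped, and two of them would lose conditions if followed literally: (L4) is \emph{not} automatic --- after substitution it reads $h(a\circ b-b\circ a)=0$, i.e.\ it is precisely (\ref{es1}) --- and (L6) is \emph{not} vacuous --- it yields exactly (\ref{es5}). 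The correct bookkeeping is: (L1) $\Leftrightarrow$ (\ref{es2}), (L2) $\Leftrightarrow$ $g$ multiplicative, (L3) $\Leftrightarrow$ (\ref{es3}) (it does not also produce (\ref{es1})), (L4) $\Leftrightarrow$ (\ref{es1}), (L5) $\Leftrightarrow$ (\ref{es4}), (L6) $\Leftrightarrow$ (\ref{es5}), while (L7)--(L10) are the ones that hold identically on a one-dimensional $V$ by antisymmetry. With these corrections your argument coincides with the paper's.
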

\begin{proof}
Given a left-symmetric (resp. Novikov) extending structure $\Omega(A,V)=(l_A,r_A,l_V,r_V,f,\cdot)$.
By the fact that ${\text{dim}}_k(V)=1$, we can set
\begin{eqnarray*}
&&l_A(a)x=h(a)x,~~~r_A(a)x=g(a)x,~~l_V(x)a=D(a),\\
&&r_V(x)a=T(a),~~~x\cdot x=\alpha x,~~~f(x,x)=a_0,
\end{eqnarray*}
where $a_0\in A$, $\alpha\in k$, $g: A\rightarrow k$, $h: A\rightarrow k$, $D$ and $T: A\rightarrow A$ are linear maps.

Then, by a straightforward computation, we can obtain that the conditions $(L1)$-$(L10)$ in Theorem \ref{t1} are equivalent
to the fact that $g: A\rightarrow k$ is a morphism of algebras and $(\ref{es1})-(\ref{es5})$ hold. Moreover,
the conditions $(N1)-(N10)$ are equivalent to the fact that $(\ref{df1})-(\ref{df5})$ hold.
\end{proof}

Let $(h,g,D,T,a_0, \alpha)\in \mathcal{FL}(A)$ (resp. $\in \mathcal{FN}(A)$). By Proposition \ref{4p},
we denote the unified product associated with the extending structure corresponding to $(h,g,D,T,a_0, \alpha)$
by $LS(A,x| (h,g,D,T,a_0, \alpha))$ (resp. $NV(A,x| (h,g,D,T,a_0, \alpha))$).

Next, we present a classification of all left-symmetric (resp. Novikov) algebras on $E$ containing $A$ as a subalgebra whose codimension is 1.

\begin{theorem}\label{tt1}
Let $A$ be a left-symmetric (resp. Novikov) algebra of codimension 1 in the vector space $E$.
Then, we get:\\
(1) $Extd(E,A)\cong \mathcal{H}^2_A(k,A)\cong \mathcal{FL}(A)/\equiv (\text{resp}.\mathcal{FN}(A)/\equiv) $, where $\equiv$ is the equivalence relation on the set
$\mathcal{FL}(A)$ (resp. $\mathcal{FN}(A)$) as follows: $(h,g,D,T,a_0, \alpha)$ $\equiv$ $(h^{'},g^{'},D^{'},T^{'},a^{'}_0, \alpha^{'})$ if and only if
$h=h^{'}$, $g=g^{'}$ and there exists a pair $(\beta,b_0)\in k^{*}\times A$ such that for any $a\in A$:
\begin{eqnarray}
\label{q1} D(a)=b_0\circ a+\beta D^{'}(a)-g(a)b_0,\\
\label{q2}T(a)=a\circ b_0+\beta T^{'}(a)-h(a)b_0,\\
\label{q3}a_0=b_0\circ b_0+\beta D^{'}(b_0)+\beta T^{'}(b_0)+\beta^2 a^{'}_0-\alpha b_0,\\
\label{q4}\alpha=\beta \alpha^{'}+h^{'}(b_0)+g^{'}(b_0).
\end{eqnarray}
The bijection between $\mathcal{FL}(A)/\equiv$ (resp. $\mathcal{FN}(A)/\equiv$ ) and $Extd(E,A)$ is given by
$$\overline{(h,g,D,T,a_0, \alpha)}\mapsto LS(A,x| (h,g,D,T,a_0, \alpha))$$
$$(\text{resp.} \overline{(h,g,D,T,a_0, \alpha)}\mapsto NV(A,x| (h,g,D,T,a_0, \alpha)),$$
where $\overline{(h,g,D,T,a_0, \alpha)}$ is the equivalence class of $\mathcal{FL}(A)$ (resp. $\mathcal{FN}(A)$) under $\equiv$.\\
(2) $Extd^{'}(E,A)\cong \mathcal{H}^2(k,A)\cong \mathcal{FL}(A)/\approx (\text{resp}.\mathcal{FN}(A)/\approx)$, where $\approx$ is the equivalence relation on the set
$\mathcal{FL}(A)$ (resp. $\mathcal{FN}(A)$ ) as follows: $(h,g,D,T,a_0, \alpha)$ $\approx$ $(h^{'},g^{'},D^{'},T^{'},a^{'}_0, \alpha^{'})$ if and only if
$h=h^{'}$, $g=g^{'}$ and there exists a $b_0 \in  A$ such that (\ref{q1})-(\ref{q4}) holds for $\beta=1$.
The bijection between $\mathcal{FL}(A)/\approx$ (resp. $\mathcal{FN}(A)/\approx$ ) and $Extd^{'}(E,A)$ is given by
$$\overline{\overline{(h,g,D,T,a_0, \alpha)}}\mapsto LS(A,x| (h,g,D,T,a_0, \alpha))$$
$$(\text{resp.}\overline{\overline{(h,g,D,T,a_0, \alpha)}}\mapsto NV(A,x| (h,g,D,T,a_0, \alpha)),)$$
where $\overline{\overline{(h,g,D,T,a_0, \alpha)}}$ is the equivalence class of $\mathcal{FL}(A)$ (resp. $\mathcal{FN}(A)$) under $\approx$.
\end{theorem}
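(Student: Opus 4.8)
The plan is to combine the three structural results already in hand --- Theorem \ref{th1}, Proposition \ref{4p}, and Lemma \ref{pt2} --- and then carry out the explicit one-dimensional computation. By Theorem \ref{th1}, $\text{Extd}(E,A)$ is in bijection with $\mathcal{H}^2_A(k,A)=\mathfrak{T}(A,V)/\equiv$ and $\text{Extd}^{'}(E,A)$ with $\mathcal{H}^2(k,A)=\mathfrak{T}(A,V)/\approx$, where $V$ is a fixed complement of $A$ in $E$, necessarily one-dimensional. By Proposition \ref{4p}, once a basis $\{x\}$ of $V$ is chosen, the underlying set $\mathfrak{T}(A,V)$ is identified with $\mathcal{FL}(A)$ (resp. $\mathcal{FN}(A)$), the extending structure attached to $(h,g,D,T,a_0,\alpha)$ being $l_A(a)x=h(a)x$, $r_A(a)x=g(a)x$, $l_V(x)a=D(a)$, $r_V(x)a=T(a)$, $x\cdot x=\alpha x$, $f(x,x)=a_0$. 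Thus the whole statement reduces to transporting the equivalence relations $\equiv$ and $\approx$ on $\mathfrak{T}(A,V)$ through this identification and recording what they become.

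Next I would unravel $\Omega(A,V)\equiv\Omega^{'}(A,V)$ in the one-dimensional setting. Such an equivalence is, by definition, witnessed by a pair $(\lambda,\mu)$ with $\lambda\colon V\to A$ linear and $\mu\in\text{Aut}_k(V)$; since $\dim_k V=1$, $\lambda$ amounts to a single element $b_0:=\lambda(x)\in A$ and $\mu$ amounts to a scalar $\beta\in k^{*}$ with $\mu(x)=\beta x$. Substituting this, together with the dictionary of Proposition \ref{4p}, into the six identities defining $\equiv$ (equivalently, into (\ref{er1})-(\ref{er6}) of Lemma \ref{pt2}): the identities for $l_A$ and $r_A$ force $h=h^{'}$ and $g=g^{'}$ (here one uses $\beta\neq0$); the identity for $l_V$ becomes $D(a)=b_0\circ a+\beta D^{'}(a)-g(a)b_0$, i.e.\ (\ref{q1}); the identity for $r_V$ becomes $T(a)=a\circ b_0+\beta T^{'}(a)-h(a)b_0$, i.e.\ (\ref{q2}); evaluating the formula for $x\cdot x$ on the basis vector gives $\alpha=\beta\alpha^{'}+h^{'}(b_0)+g^{'}(b_0)$, i.e.\ (\ref{q4}); and evaluating the formula for $f(x,x)$ gives $a_0=b_0\circ b_0+\beta D^{'}(b_0)+\beta T^{'}(b_0)+\beta^{2}a_0^{'}-\alpha b_0$, i.e.\ (\ref{q3}). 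Conversely, any $(\beta,b_0)\in k^{*}\times A$ with $h=h^{'}$, $g=g^{'}$ and (\ref{q1})-(\ref{q4}) gives back such a pair $(\lambda,\mu)$, so the transported relation on $\mathcal{FL}(A)$ is exactly the one claimed. For part (2) the same computation is run with the extra constraint $\mu=\text{Id}_V$, i.e.\ $\beta=1$, which is precisely the definition of $\approx$. The Novikov case needs nothing new: $\equiv$ and $\approx$ are defined by the very same six identities, so the extra flag-datum conditions (\ref{df1})-(\ref{df5}) (which encode membership in $\mathfrak{T}(A,V)$, not the equivalence) do not interfere, and the transported relation on $\mathcal{FN}(A)$ is the restriction to $\mathcal{FN}(A)$ of the one on $\mathcal{FL}(A)$.

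Finally I would assemble the bijections: the map $\overline{\Omega(A,V)}\mapsto(A\natural V,\circ)$ of Theorem \ref{th1}, composed with the bijection $\mathcal{FL}(A)/\equiv\,\to\,\mathfrak{T}(A,V)/\equiv$ coming from Proposition \ref{4p}, becomes $\overline{(h,g,D,T,a_0,\alpha)}\mapsto LS(A,x\,|\,(h,g,D,T,a_0,\alpha))$, and likewise for $\approx$ and for the Novikov algebras. I expect the only genuinely non-routine point to be the bookkeeping in the substitution step: one must check that the six defining identities of $\equiv$, after specialization to $\dim_k V=1$, collapse to exactly the four displayed identities (\ref{q1})-(\ref{q4}) together with $h=h^{'}$, $g=g^{'}$, with no surviving extra constraint. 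But since every linear map into or out of a one-dimensional space is captured by a single scalar or a single vector, no data is lost in passing between $(\lambda,\mu)$ and $(\beta,b_0)$, and the correspondence is exact.
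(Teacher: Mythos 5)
Your proposal is correct and follows essentially the same route as the paper: the paper also deduces Theorem \ref{tt1} directly from Theorem \ref{th1} and Lemma \ref{pt2} (together with the identification of $\mathfrak{T}(A,V)$ with $\mathcal{FL}(A)$, resp.\ $\mathcal{FN}(A)$, from Proposition \ref{4p}), specializing the equivalence data $(\lambda,\mu)$ to $(b_0,\beta)\in A\times k^{*}$ in the one-dimensional case. Your explicit substitution yielding $h=h'$, $g=g'$ and (\ref{q1})--(\ref{q4}) (with $\beta=1$ for $\approx$) is exactly the ``simple computation'' the paper leaves to the reader.
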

\begin{proof}
This theorem can be directly obtained from Lemma \ref{pt2} and Theorem \ref{th1} by some simple computations.
\end{proof}

Finally, we provide two explicit examples to compute $\mathcal{H}^2(k,A)$.

\begin{example}
Let $k$ be a field of characteristic zero and $A$ be a 4-dimensional left-symmetric algebra with a basis
$\{e_1,e_2,e_3,e_4\}$ and the non-zero products given by
\begin{eqnarray}
\label{qw1}e_1\circ e_2=e_4,~~e_3\circ e_2=e_1,~~e_4\circ e_3=2e_3,\\
\label{qw2}e_2\circ e_1=e_4,~~e_4\circ e_1=e_1,~~e_2\circ e_3=e_4,~~e_4\circ e_2=-e_2.
\end{eqnarray}
This left-symmetric algebra is the unique 4-dimensional complete simple left-symmetric algebra up to isomorphisms over $\mathbb{C}$ (see \cite{Bu}).

By some computations, we can obtain that the flag datum $(h,g,D,T,a_0,\alpha)$ of $A$ is given by as follows:
\begin{eqnarray}
h=g=0,~~a_0=dx_4,~~\alpha=e,\\
D=\left(
    \begin{array}{cccc}
      c & 0 & 0 & 0 \\
      0 & b & 0 & 0 \\
      0 & 0 & c & 0 \\
      0 & 0 &0 & b \\
    \end{array}
  \right),~~~T=\left(
    \begin{array}{cccc}
      c & 0 & 0 & 0 \\
      0 & c & 0 & 0 \\
      0 & 0 & b & 0 \\
      0 & 0 &0 & b \\
    \end{array}
  \right),
\end{eqnarray}
where $b$, $c$, $d$, $e\in k$ and $c^2=ec$, $b^2=eb$.

Therefore, any 5-dimensional left-symmetric algebra that contains $A$ as a left-symmetric subalgebra
is isomorphic to the following left-symmetric algebra denoted by $A_{b,c,d,e}$ with the basis
$\{e_1,e_2,e_3,e_4,x\}$ and the products given by (\ref{qw1}), (\ref{qw2}) and
\begin{eqnarray}
\label{qw3}&&e_1\circ x=ce_1,~~x\circ e_1=ce_1,~~e_2\circ x=ce_2,~~x\circ e_2=be_2,\\
\label{qw4}&&e_3\circ x=be_3,~~x\circ e_3=ce_3,~~e_4\circ x=be_4,~~x\circ e_4=be_4,\\
&&x\circ x=de_4+ex,
\end{eqnarray}
where $c^2=ec$, $b^2=eb$.

By Theorem \ref{tt1}, we can get that two such left-symmetric algebras $A_{b,c,d,e}$ and $A_{b^{'},c^{'},d^{'},e^{'}}$
are isomorphic if and only if there exists $\beta\in k^{\ast}$ such that $b=\beta b^{'}$, $c=\beta c^{'}$,
$d=\beta^2 d^{'}$ and $e=\beta e^{'}$. Obviously, $A_{0,0,d,e}$ is not isomorphic to $A_{b^{'},c^{'},d^{'},e^{'}}$ when one of $b^{'}$ and $c^{'}$ is not zero. And, $A_{0,c,d,e}$ and $A_{b,0,d,e}$ are not isomorphic to $A_{b^{'},c^{'},d^{'},e^{'}}$ for
$b^{'}\neq 0$, $c^{'}\neq 0$ and any $c$, $d$, $e$, $c^{'}$, $d^{'}$, $e^{'}\in k$.

Thus, we can classify these left-symmetric algebras into four
cases, i.e., $A_{0,0,d,e}$, $A_{b,0,d,b}$ $(b\neq 0)$,
$A_{0,c,d,c}$ $(c\neq 0)$ and $A_{b,b,d,b}$ $(b\neq 0)$.
By the above discussion, any two left-symmetric algebras in different cases are not isomorphic.
Moreover, $A_{0,0,d,e}$ and $A_{0,0,d^{'},e^{'}}$
are isomorphic if and only if there exists $\beta\in k^{\ast}$ such that $e=\beta e^{'}$, 
and $d=\beta^2 d^{'}$; $A_{b,0,d,b}$$(b\neq 0)$ and $A_{b^{'},0,d^{'},b^{'}}$ $(b^{'}\neq 0)$ 
are isomorphic if and only if there exists $\beta\in k^{\ast}$ such that $b=\beta b^{'}$,
and $d=\beta^2 d^{'}$; $A_{0,c,d,c}$ $(c\neq 0)$ and $A_{0,c^{'},d^{'},c^{'}}$ $(c^{'}\neq 0)$
are isomorphic if and only if there exists $\beta\in k^{\ast}$ such that $c=\beta c^{'}$,
and $d=\beta^2 d^{'}$; $A_{b,b,d,b}$ $(b\neq 0)$ and $A_{b^{'},b^{'},d^{'},b^{'}}$ $(b^{'}\neq 0)$
are isomorphic if and only if there exists $\beta\in k^{\ast}$ such that $b=\beta b^{'}$,
and $d=\beta^2 d^{'}$.

Thus, $\mathcal{H}_A^2(k,A)=((k\times k)/\equiv_1)\cup ((k^\ast \times k)/\equiv_1 )\cup ((k^\ast \times k)/\equiv_1 )\cup (k^\ast \times k/\equiv_1)$ where $\equiv_1$ is the
following relation on $k \times k$ or $k^\ast \times k$: $(u,v)\equiv_1 (u^{'},v^{'})$ if and only if
there exists $\beta\in k^{\ast}$ such that $u=\beta u^{'}$,
and $v=\beta^2 v^{'}$.

Moreover, by Theorem \ref{tt1}, we can also get $\mathcal{H}^2(k,A)=(k \times k) \cup (k^\ast \times k)\cup (k^\ast \times k)\cup (k^\ast \times k)$.
\end{example}

\begin{example}
Let $k$ be a field of characteristic zero and $A$ be a 3-dimensional Novikov algebra with a basis
$\{e_1,e_2,e_3\}$ and the non-zero products given by
\begin{eqnarray}
\label{qw5}e_1\circ e_1=-e_1+e_2,~~e_2\circ e_1=-e_2,~~e_3\circ e_1=-e_3.
\end{eqnarray}
This Novikov algebra whose sub-adjacent Lie algebra is just $\mathfrak{g}(A)$ with the Lie brackets given by
$[e_1,e_2]=e_2$, $[e_1,e_3]=e_3$ can be referred to \cite{Dd}.

By a long but straightforward computation, we obtain the following three flag datums $(h,g,D,T,a_0,\alpha)$:\\
{\bf Case 1} \begin{eqnarray*}
&&h=g=0,~~a_0=c_1 e_1+c_2 e_2+c_3e_3,\\
&&D=\left(
    \begin{array}{ccc}
      a_{11} & a_{12} & a_{13} \\
      0 &0& 0  \\
      0 & 0 & 0  \\
    \end{array}
  \right),~~~T=\left(
    \begin{array}{ccc}
      a_{11} & -a_{11}& 0  \\
      0 & a_{11} & 0  \\
      0 & 0 & a_{11}\\
    \end{array}
  \right),
\end{eqnarray*}
where $a_{11}$, $a_{12}$, $a_{13}$, $c_1$, $c_2$, $c_3\in k$ and $c_1=a_{11}\alpha -a_{11}^2$,
$c_2=\alpha(a_{11}+a_{12})-a_{11}a_{12}$, $c_3=\alpha a_{13}-a_{11}a_{13}$.

In this case, any 4-dimensional Novikov algebra that contains $A$ as a Novikov subalgebra
is isomorphic to the following Novikov algebra denoted by $A_{a_{11},a_{12},a_{13},\alpha}$ with the basis
$\{e_1,e_2,e_3,x\}$ and the products given by (\ref{qw5}) and
\begin{eqnarray*}
&&e_1\circ x=a_{11}(e_1-e_2),~~x\circ e_1=a_{11}e_1+a_{12}e_2+a_{13}e_3,\\
&&e_2\circ x=a_{11}e_2,~~x\circ e_2=0,~~e_3\circ x=a_{11}e_3,~~x\circ e_3=0,\\
&&x\circ x=(a_{11}\alpha -a_{11}^2)e_1+(\alpha(a_{11}+a_{12})-a_{11}a_{12})e_2+(\alpha a_{13}-a_{11}a_{13})e_3+\alpha x,
\end{eqnarray*}
where $a_{11}$, $a_{12}$, $a_{13}$, $\alpha\in k$.

By Theorem \ref{tt1}, two such Novikov algebras $A_{a_{11},a_{12},a_{13},\alpha}$ and $A_{a_{11}^{'},a_{12}^{'},a_{13}^{'},\alpha^{'}}$
are isomorphic if and only if there exist $\beta\in k^{\ast}$ and $d_1$, $d_2$, $d_3\in k$ such that $\alpha=\beta \alpha^{'}$, $a_{11}=-d_1+\beta a_{11}^{'}$, $a_{12}=d_1-d_2+\beta a_{12}^{'}$, $a_{13}=-d_3+\beta a_{13}^{'}$,
$c_1=-d_1^2+2\beta d_1 a_{11}^{'}+\beta^2c_1^{'}-\alpha d_1$, $c_2=d_1^2-d_1d_2+\beta d_1a_{12}^{'}-\beta d_1a_{11}^{'}+\beta d_2 a_{11}^{'}+\beta^2c_2^{'}-\alpha d_2$
and $c_3=-d_1 d_3+\beta d_1 a_{13}^{'}+\beta d_3 a_{11}^{'}+\beta^2c_3^{'}-\alpha d_3$.\\

{\bf Case 2}\begin{eqnarray*}
&&h(e_1)=h(e_2)=h(e_3)=0,\\
&&g(e_1)=-1,~~g(e_2)=g(e_3)=0,\\
&&D=\left(
    \begin{array}{ccc}
      0 & a_{12} & 0 \\
      0 &0& 0  \\
      0 & 0 & 0  \\
    \end{array}
  \right),~~~T=\left(
    \begin{array}{ccc}
      -a_{12} & b_{12}& b_{13}  \\
      0 & -a_{12} & 0  \\
      0 & -a_{12} & 0\\
    \end{array}
  \right),\\
&&a_0=(a_{12}b_{12}+a_{12}b_{13})e_2,~~~\alpha=-a_{12},
\end{eqnarray*}
where $a_{12}$, $b_{12}$, $b_{13}\in k$.

In this case, any 4-dimensional Novikov algebra that contains $A$ as a Novikov subalgebra
is isomorphic to the following Novikov algebra denoted by $A_{a_{11},a_{12},a_{13},\alpha}$ with the basis
$\{e_1,e_2,e_3,x\}$ and the products given by (\ref{qw5}) and
\begin{eqnarray*}
&&e_1\circ x=-a_{12}e_1+b_{12}e_2+b_{13}e_3,~~e_2\circ x=-a_{12}e_2,~~e_3\circ x=-a_{12}e_2,\\
&&x\circ e_1=a_{12}e_2-x,~~x\circ e_2=x\circ e_3=0,\\
&&x\circ x=-a_{12}x+(a_{12}b_{12}+a_{12}b_{13})e_2,
\end{eqnarray*}
where $a_{12}$, $b_{12}$, $b_{13}\in k$.

By Theorem \ref{tt1}, two such Novikov algebras $A_{a_{12},b_{12},b_{13}}$ and $A_{a_{12}^{'},b_{12}^{'},b_{13}^{'}}$
are isomorphic if and only if there exist $\beta\in k^{\ast}$ and $d_1$, $d_2$, $d_3\in k$ such that $a_{12}=d_1+\beta a_{12}^{'}$,
$b_{12}=\beta b_{12}^{'}+d_1$, $b_{13}=\beta b_{13}^{'}$,
$a_{12}b_{12}+a_{12}b_{13}=d_1^2-d_1d_2+\beta d_1 a_{12}^{'}+\beta d_1 b_{12}^{'}-\beta d_2 a_{12}^{'}-\beta d_3 a_{12}^{'}
+\beta^2(a_{12}^{'}b_{12}^{'}+a_{12}^{'}b_{13}^{'})+a_{12}d_2$, and $\beta d_1 b_{13}^{'}+a_{12}d_3-d_1d_3=0$.

{\bf Case 3} \begin{eqnarray*}
&&h(e_1)=-1,~~h(e_2)=h(e_3)=0,\\
&&g(e_1)=-1,~~g(e_2)=g(e_3)=0,\\
&&D=\left(
    \begin{array}{ccc}
      0 & a_{12} & a_{13} \\
      0 &0& 0  \\
      0 & 0 & 0  \\
    \end{array}
  \right),~~~T=\left(
    \begin{array}{ccc}
      0 & b_{12}& b_{13}  \\
      0 & -a_{12} & -a_{13}  \\
      0 & b_{32} & b_{33}\\
    \end{array}
  \right),\\
&&a_0=c_1e_1+c_2e_2+c_3e_3,
\end{eqnarray*}
where $a_{12}$, $a_{13}$, $b_{12}$, $b_{13}$, $b_{32}$, $b_{33}$, $c_1$, $c_2$, $c_3\in k$. In this case, there are two subcases. \\
{\bf Subcase 1}: When $a_{13}\neq 0$ or $b_{32}\neq 0$, $\alpha=b_{33}-a_{12}$, $c_1=a_{13}b_{32}-a_{12}b_{33}$, $c_2=a_{12}^2-a_{13}b_{32}+a_{12}b_{33}-b_{33} b_{12}$,
and $c_3=a_{12}a_{13}-a_{13}b_{12}-a_{13}b_{33}+a_{12}b_{13}$.\\
{\bf Subcase 2}: When $a_{13}=b_{32}=0$, $c_1=-a_{12}\alpha-a_{12}^2=\alpha b_{33}-b_{33}^2$, $c_2=2a_{12}^2-b_{12}a_{12}+a_{12}\alpha-\alpha b_{12}$
and $c_3=b_{13}b_{33}-\alpha b_{13}$.

In this case, any 4-dimensional Novikov algebra that contains $A$ as a Novikov subalgebra
is isomorphic to the following Novikov algebra $A_{a_{11},a_{12},b_{12},b_{13}, b_{32}, b_{33},c_1, c_2, c_3,\alpha}$ with the basis
$\{e_1,e_2,e_3,x\}$ and the products given by (\ref{qw5}) and
\begin{eqnarray}
\label{ne1}e_1\circ x=-x+b_{12}e_2+b_{13}e_3,~~e_2\circ x=-a_{12}e_2-a_{13}e_3,\\
\label{ne2}~~e_3\circ x=b_{32}e_2+b_{33}e_3,~~x\circ e_1=a_{12}e_2+a_{13}e_3-x,\\
\label{ne3}~~x\circ e_2=x\circ e_3=0,~~x\circ x=\alpha x+c_1e_1+c_2e_2+c_3e_3,
\end{eqnarray}
where these coefficients satisfy the conditions in Subcase 1 and Subcase 2.

By Theorem \ref{tt1}, two such Novikov algebras $A_{a_{12},a_{13},b_{12},b_{13}, b_{32}, b_{33},c_1, c_2, c_3,\alpha}$  and $A_{a_{12}^{'},a_{13}^{'},b_{12}^{'},b_{13}^{'}, b_{32}^{'}, b_{33}^{'},c_1^{'}, c_2^{'}, c_3^{'},\alpha^{'}}$
are isomorphic if and only if there exist $\beta\in k^{\ast}$ and $d_1$, $d_2$, $d_3\in k$ such that
\begin{eqnarray}
\label{ne3}\alpha =\beta \alpha^{'}-2d_1,~~a_{12}=d_1+\beta a_{12}^{'},~~a_{13}=\beta a_{13}^{'},\\
\label{ne4}b_{12}=d_1+d_2+\beta b_{12}^{'},~~b_{13}=\beta b_{13}^{'}+d_3,~~b_{32}=\beta b_{32}^{'},\\
\label{ne5}b_{33}=-d_1+\beta b_{33}^{'},~~c_1=-d_1^2+c_1^{'}\beta^2-\alpha d_1,\\
\label{ne6}c_2=d_1^2-d_1d_2+\beta d_1a_{12}^{'}+\beta d_1b_{12}^{'}-\beta d_2a_{12}^{'}+\beta d_3 b_{32}^{'}+\beta^2c_2^{'}-\alpha d_2,\\
\label{ne7}c_3=-d_1d_3+\beta d_1a_{13}^{'}+\beta d_1b_{13}^{'}-\beta d_2a_{13}^{'}+\beta d_3 b_{33}^{'}+\beta^2c_3^{'}-\alpha d_3.
\end{eqnarray}

In detail, denote the Novikov algebras corresponding to Subcase 1 and Subcase 2 by $A_{a_{12},b_{12},b_{13}, b_{32}, b_{33}}$
and $A_{a_{12},b_{12},b_{13}, b_{33},\alpha}$ respectively. By (\ref{ne3}) and (\ref{ne4}), it is easy to see that $A_{a_{12},b_{12},b_{13}, b_{32}, b_{33}}$
is not isomorphic to any $A_{a_{12}^{'},b_{12}^{'},b_{13}^{'}, b_{33}^{'},\alpha^{'}}$.
And $A_{a_{12},b_{12},b_{13}, b_{32}, b_{33}}$ is isomporphic to $A_{a_{12}^{'},b_{12}^{'},b_{13}^{'}, b_{32}^{'}, b_{33}^{'}}$ if and only if
there exist $\beta\in k^{\ast}$ and $d_1$, $d_2$, $d_3\in k$ such that (\ref{ne3})-(\ref{ne7}) hold where $\alpha=b_{33}-a_{12}$, $c_1=a_{13}b_{32}-a_{12}b_{33}$, $c_2=a_{12}^2-a_{13}b_{32}+a_{12}b_{33}-b_{33} b_{12}$, $c_3=a_{12}a_{13}-a_{13}b_{12}-a_{13}b_{33}+a_{12}b_{13}$,$\alpha^{'}=b_{33}^{'}-a_{12}^{'}$, $c_1^{'}=a_{13}^{'}b_{32}^{'}-a_{12}^{'}b_{33}^{'}$, $c_2^{'}={a_{12}^{'}}^2-a_{13}^{'}b_{32}^{'}+a_{12}^{'}b_{33}^{'}-b_{33}^{'}b_{12}^{'}$
and $c_3^{'}=a_{12}^{'}a_{13}^{'}-a_{13}^{'}b_{12}^{'}-a_{13}^{'}b_{33}^{'}+a_{12}^{'}b_{13}^{'}$.
Similarly, $A_{a_{12},b_{12},b_{13}, b_{33},\alpha}$ is isomorphic to $A_{a_{12}^{'},b_{12}^{'},b_{13}^{'}, b_{33}^{'},\alpha^{'}}$
if and only if
there exist $\beta\in k^{\ast}$ and $d_1$, $d_2$, $d_3\in k$ such that (\ref{ne3})-(\ref{ne7}) hold where $a_{13}=a_{13}^{'}=b_{32}=b_{32}^{'}=0$, $c_1=-a_{12}\alpha-a_{12}^2=\alpha b_{33}-b_{33}^2$, $c_2=2a_{12}^2-b_{12}a_{12}+a_{12}\alpha-\alpha b_{12}$
, $c_3=b_{13}b_{33}-\alpha b_{13}$, $c_1^{'}=-a_{12}^{'}\alpha^{'}-{a_{12}^{'}}^2=\alpha^{'} b_{33}^{'}-{b_{33}^{'}}^2$, $c_2^{'}=2{a_{12}^{'}}^2-b_{12}^{'}a_{12}^{'}+a_{12}^{'}\alpha^{'}-\alpha^{'} b_{12}^{'}$
, $c_3^{'}=b_{13}^{'}b_{33}^{'}-\alpha^{'}b_{13}^{'}$.

{\bf Case 4}\begin{eqnarray*}
&&h(e_1)=\gamma,~~h(e_2)=h(e_3)=0,\\
&&g(e_1)=-1,~~g(e_2)=g(e_3)=0,\\
&&D=\left(
    \begin{array}{ccc}
      0 & 0 & 0 \\
      0 &0& 0  \\
      0 & 0 & 0  \\
    \end{array}
  \right),~~~T=\left(
    \begin{array}{ccc}
      0 & b_{12}& b_{13}  \\
      0 & 0 & 0  \\
      0 & 0 & 0\\
    \end{array}
  \right),\\
&&a_0=c_2e_2+c_3e_3,~~~\alpha=0,
\end{eqnarray*}
where $c_2$, $c_3$, $\gamma \in k$, $\gamma \neq 0$ and $\gamma \neq -1$ and when $\gamma\neq-\frac{1}{2}$, $c_2=c_3=0$.

Therefore, in this case, there are two classes of Novikov algebras containing $A$ as a subalgebra.
One is the Novikov algebra $A_{b_{12},b_{13},c_2,c_3}$  with the basis
$\{e_1,e_2,e_3,x\}$ and the products given by (\ref{qw5}) and
\begin{eqnarray}
&&e_1\circ x=-\frac{1}{2}x+b_{12}e_2+b_{13}e_3,~~x\circ e_1=-x,\\
&&e_2\circ x=x\circ e_2=e_3\circ x=x\circ e_3=0,\\
&&x\circ x=c_2e_2+c_3e_3,
\end{eqnarray}
where $b_{12}$, $b_{13}$, $c_2$, $c_3\in k$. By Theorem \ref{tt1}, two such Novikov algebras $A_{b_{12},b_{13},c_2,c_3}$   and $A_{b_{12}^{'},b_{13}^{'},c_2^{'},c_3^{'}}$
are isomorphic if and only if there exist $\beta\in k^{\ast}$  and $d_2$, $d_3\in k$ such that
$b_{12}=\beta b_{12}^{'}+\frac{1}{2}d_2$, $b_{13}=\beta b_{13}^{'}+\frac{1}{2}d_3$, $c_2=\beta^2 c_2^{'}$ and
$c_3=\beta^2 c_3^{'}$.

The other is the Novikov algebra $A_{b_{12},b_{13},\gamma}$  with the basis
$\{e_1,e_2,e_3,x\}$ and the products given by (\ref{qw5}) and
\begin{eqnarray}
&&e_1\circ x=\gamma x+b_{12}e_2+b_{13}e_3,~~x\circ e_1=-x,\\
&&e_2\circ x=x\circ e_2=e_3\circ x=x\circ e_3=x\circ x=0,
\end{eqnarray}
where $b_{12}$, $b_{13}$, $\gamma\in k$ and $\gamma\neq 0$, $\gamma\neq -1$, and $\gamma\neq -\frac{1}{2}$.
By Theorem \ref{tt1}, two such Novikov algebras $A_{b_{12},b_{13},\gamma}$   and $A_{b_{12}^{'},b_{13}^{'},\gamma^{'}}$
are isomorphic if and only if there exist $\beta\in k^{\ast}$  and $d_2$, $d_3\in k$ such that
$b_{12}=\beta b_{12}^{'}-\gamma d_2$, $b_{13}=\beta b_{13}^{'}-\gamma d_3$. Therefore, in this case, if $\gamma$ is fixed, all such Novikov algebras are isomorphic.

It is known that if $(h,g,D,T,a_0, \alpha)$ $\equiv$ $(h^{'},g^{'},D^{'},T^{'},a^{'}_0, \alpha^{'})$,
$h=h^{'}$, $g=g^{'}$. Therefore, the Novikov algebras in different cases are not isomorphic.
Thus, the classifying object $\text{Extd}(k^4, A)\cong \mathcal{H}_A^2(k,A)$ has been described: it is equal to the disjoint union
of the six quotient spaces described above.
\end{example}
\section{Classifying complements for left-symmetric algebras}
In this section, we will study the classifying complements problem for left-symmetric (resp. Novikov) algebras.

Let $A\subseteq E$ be a left-symmetric (resp. Novikov) subalgebra of $(E,\circ)$. A left-symmetric (resp. Novikov) subalgebra $B$ of $(E,\circ)$ is called a
\emph{complement} of $A$ in $(E,\circ)$ (or an $A$-complement of $(E,\circ)$) if $E=A+B$ and $A\cap B=\{0\}$. If
$B$ is an $A$-complement in $(E,\circ)$, by Remark \ref{rr1}, we get $E\cong A\bowtie B$ for some bicrossed product of $A$ and
$B$ (the detail construction can be referred to the proof of Theorem \ref{pt1} or \cite{Bai}).

For a left-symmetric (resp. Novikov) subalgebra $A$ of $(E,\circ)$, denote $\mathcal{F}(A,E)$ the set of the isomorphism classes of all $A$-complements in $E$.
Define the factorization index of $A$ in $E$ as $[E:A]:=|\mathcal{F}(A,E)|$.

\begin{definition}
Let $(A, B,l_A, r_A, l_B, r_B)$ be a matched pair of left-symmetric (resp. Novikov) algebras. A linear map
$\varphi: B\rightarrow A$ is called a \emph{deformation map} of the matched pair $(A, B,l_A, r_A, l_B, r_B)$ if $\varphi$
satisfies the following condition for any $x$, $y\in B$:
\begin{gather}
\varphi(x\circ y)-\varphi(x)\circ \varphi(y)=l_B(x)\varphi(y)+r_B(y)\varphi(x)\\
-\varphi(l_A(\varphi(x))y+r_A(\varphi(y))x).\nonumber
\end{gather}

Denote by the set of all deformation maps of the matched pair $(A, B,l_A,r_A,l_B,r_B)$ by $\mathcal{DM}(B,A|(l_A,r_A,l_B,r_B))$.
\end{definition}

Next, we begin to study the classifying complements problem for left-symmetric (resp. Novikov) algebras using the concept of deformation map.

\begin{theorem}\label{th2}
Let $A$ be a left-symmetric (resp. Novikov) subalgebra of $(E,\circ)$, $B$ a given $A$-complement of $E$ with the associated canonical matched pair
$(A,B,l_A,r_A,$ $l_B,r_B)$.\\

(1) Let $\varphi: B\rightarrow A$ be a deformation map of the above matched pair. Then,
$B_{\varphi}:=B$ as a vector space is a left-symmetric (resp. Novikov) algebra with the new product given as follows for any $x$, $y\in B$:
\begin{eqnarray}\label{f1}
x\circ_{\varphi} y:=x\circ y+l_A(\varphi(x))y+r_A(\varphi(y))x.
\end{eqnarray}
$B_{\varphi}$ is called the $\varphi$-deformation of $B$. Moreover, $B_{\varphi}$ is an $A$-complement of $E$.\\

(2) $\overline{B}$ is an $A$-complement of $E$ if and only if $\overline{B}$ is isomorphic to $B_\varphi $ for some deformation map
$\varphi: B\rightarrow A$ of the matched pair $(A,B,l_A,r_A,l_B,r_B)$.
\end{theorem}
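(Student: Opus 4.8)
The plan is to prove the two statements in tandem by exploiting the structure of the bicrossed product $E\cong A\bowtie B$ supplied by the matched pair $(A,B,l_A,r_A,l_B,r_B)$. For part (1), I would first observe that the deformation map condition is precisely what one needs so that the graph $\widetilde{B}_\varphi:=\{(\varphi(x),x)\mid x\in B\}\subseteq A\bowtie B$ is closed under the bicrossed product multiplication; computing $(\varphi(x),x)\circ(\varphi(y),y)$ using the formula $(a,x)\circ(b,y)=(a\circ b+l_B(x)b+r_B(y)a,\, x\cdot y+l_A(a)y+r_A(b)x)$ from Example \ref{ex2}, the first component equals $\varphi(x)\circ\varphi(y)+l_B(x)\varphi(y)+r_B(y)\varphi(x)$, and the deformation identity rewrites this as $\varphi(x\circ y)+\varphi\big(l_A(\varphi(x))y+r_A(\varphi(y))x\big)$, which is exactly $\varphi$ applied to the second component $x\circ y+l_A(\varphi(x))y+r_A(\varphi(y))x$. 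Hence $\widetilde{B}_\varphi$ is a subalgebra isomorphic (via $x\mapsto(\varphi(x),x)$) to $B$ equipped with the product $\circ_\varphi$ of \eqref{f1}, so $B_\varphi$ is a left-symmetric (resp. Novikov) algebra. Since the projection $A\bowtie B\to B$ restricts to a linear isomorphism $\widetilde{B}_\varphi\to B$, we get $E=A\dotplus\widetilde{B}_\varphi$ and $A\cap\widetilde{B}_\varphi=\{0\}$, so $\widetilde{B}_\varphi$—hence $B_\varphi$—is an $A$-complement of $E$.

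For part (2), the ``if'' direction is immediate from (1) together with the fact that being an $A$-complement is an isomorphism-invariant notion (an algebra isomorphism of $E$ fixing $A$ pointwise carries complements to complements). For the ``only if'' direction, suppose $\overline{B}$ is any $A$-complement of $E$. Writing $E=A\oplus B$ as vector spaces, for each $x\in B$ decompose its image under the projection along the first $\overline{B}$-grading: concretely, since $E=A\oplus\overline{B}$, there is a unique linear map $\varphi:B\to A$ and a unique linear isomorphism $\psi:B\to\overline{B}$ with $\psi(x)=\varphi(x)+x$ for all $x\in B$ (here I use that $B$ and $\overline{B}$ are both complements, so projecting $\overline{B}$ onto $B$ along $A$ is a linear isomorphism whose inverse composed with inclusion has the stated form). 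I would then check that $\overline{B}$ being a subalgebra of $E$ forces $\varphi$ to be a deformation map: expanding $\psi(x)\circ\psi(y)=(\varphi(x)+x)\circ(\varphi(y)+y)$ inside $E\cong A\bowtie B$ and demanding that the result lie in $\overline{B}$, i.e. that its $A$-component equal $\varphi$ of its $B$-component, yields exactly the deformation map identity. This same computation shows that $\psi:B_\varphi\to\overline{B}$ is an algebra isomorphism, since the $B$-component of $\psi(x)\circ\psi(y)$ is $x\circ y+l_A(\varphi(x))y+r_A(\varphi(y))x=x\circ_\varphi y$.

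The main obstacle I anticipate is bookkeeping rather than conceptual: one must be careful that the maps $l_A,r_A,l_B,r_B$ appearing in \eqref{f1} and in the deformation condition are exactly the structure maps of the \emph{canonical} matched pair associated to the pair $(A,B)$ inside $E$—those defined via the projections as in the proof of Theorem \ref{pt1}—so that the identifications $E\cong A\bowtie B$ and the reformulation of ``$\overline{B}$ is a subalgebra'' in terms of $\varphi$ are literally the bicrossed-product formulas, with no hidden twisting. A second, minor point is to verify well-definedness of the passage $\overline{B}\mapsto\varphi$ up to the appropriate equivalence if one wants to upgrade this to the bijection with $\mathcal{HC}^2(B,A|(l_A,r_A,l_B,r_B))$ promised in the introduction; but for the statement as given it suffices to exhibit, for each complement, one deformation map $\varphi$ with $\overline{B}\cong B_\varphi$, which the construction above does directly. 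In the Novikov case one additionally checks that the right-commutativity identity $(a,x)\circ(b,y))\circ(c,z)=((a,x)\circ(c,z))\circ(b,y)$ descends to $B_\varphi$, which is automatic because $\widetilde{B}_\varphi$ is a subalgebra of the Novikov algebra $E$.
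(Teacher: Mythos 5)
Your proposal is correct and follows essentially the same route as the paper: part (1) by showing the deformation identity is exactly the condition for the graph $\{(\varphi(x),x)\}$ to be a subalgebra of the bicrossed product $A\bowtie B$, which is an $A$-complement and transports the product $\circ_\varphi$ to $B$; part (2) by projecting an arbitrary complement $\overline{B}$ onto $B$ along $A$ to produce $\varphi$ and the isomorphism $\psi:B_\varphi\to\overline{B}$, with the subalgebra condition on $\overline{B}$ yielding the deformation identity. The only cosmetic difference is your added remark on the (essentially tautological) ``if'' direction, which the paper leaves implicit.
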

\begin{proof}
(1) Given a deformation map $\varphi: B\rightarrow A$. Let
$f_\varphi: B\rightarrow E=A\bowtie B$ be the linear map defined as $ f_\varphi(x)=(\varphi(x),x)$ for any $x\in B$.

By the definition of the deformation map, we get that for any $x$, $y\in B$,
\begin{eqnarray*}
&&[(\varphi(x),x),(\varphi(y),y)]\\
&=&(\varphi(x)\circ \varphi(y)+l_B(y)\varphi(x)+l_B(x)\varphi(y),x\circ y+l_A(\varphi(x))y+r_A(\varphi(y))x)\\
&=&(\varphi(x\circ y+l_A(\varphi(x))y+r_A(\varphi(y))x), x\circ y+l_A(\varphi(x))y+r_A(\varphi(y))x).
\end{eqnarray*}
Therefore, $[(\varphi(x),x),(\varphi(y),y)]\in \text{Im}(f_\varphi)=\{(\varphi(x),x)\mid x\in B\}$. Thus, $\text{Im}(f_\varphi)$ is
a left-symmetric (resp. Novikov) subalgebra of $E=A\bowtie B$. Here, $A\cong A\times \{0\}$ is viewed as a subalgebra of $A\bowtie B$. It is easy to see that $A\cap \text{Im}(f_\varphi)=\{0\}$ and $(a,x)=(a-\varphi(x),0)+(\varphi(x),x)\in A+B$ for all $a\in A$, $x\in B$. Hence,
$\text{Im}(f_\varphi)$ is an $A$-complement of $E=A\bowtie B$. Then, we only need to prove that
$B_\varphi $ and $\text{Im}(f_\varphi)$ are isomorphic as algebras. Denote by $\widetilde{f_\varphi}:
B\rightarrow \text{Im}(f_\varphi)$ the linear map induced by $f_\varphi$. Obviously, $\widetilde{f_\varphi}$ is a linear isomorphism.
Next, we prove that $\widetilde{f_\varphi}$ is also an algebra morphism if the product of $B$ is given by (\ref{f1}).
For any $x$, $y\in B$, we get
\begin{eqnarray*}
&&\widetilde{f_\varphi}(x\circ_\varphi y)\\
&=& \widetilde{f_\varphi}(x\circ y+l_A(\varphi(x))y+r_A(\varphi(y))x)\\
&=&(\varphi(x\circ y+l_A(\varphi(x))y+r_A(\varphi(y))x),x\circ y+l_A(\varphi(x))y+r_A(\varphi(y))x)\\
&=&(\varphi(x)\circ \varphi(y)+l_B(x)\varphi(y)+r_B(y)\varphi(x), x\circ y+l_A(\varphi(x))y+r_A(\varphi(y))x)\\
&=&[(\varphi(x),x),(\varphi(y),y)]\\
&=&[\widetilde{f_\varphi}(x), \widetilde{f_\varphi}(y)].
\end{eqnarray*}
Thus, $B_\varphi$ is a left-symmetric (resp. Novikov) algebra.

(2) Let $\overline{B}$ be an arbitrary of $A$-complement of $E$. Since $E=A\oplus B=A\oplus \overline{B}$, we can obtain
four linear maps:
$$u: B\rightarrow A,~~~~v:B\rightarrow \overline{B},~~~s: \overline{B}\rightarrow A,~~~t: \overline{B}\rightarrow B$$
such that for all $x\in B$ and $y\in \overline{B}$, we get
\begin{eqnarray}\label{f2}
x=u(x)+v(x),~~~~y=s(y)+t(y).
\end{eqnarray}
Thus, by (\ref{f2}), for $x=t(y)$, $y\in \overline{B}$, we obtain
\begin{eqnarray}
-s(y)+y=t(y)=u(t(y))+v(t(y)).
\end{eqnarray}
By the unique decomposition in a direct sum, we get $u(t(y))=-s(y)$ and $v(t(y))=y$ for all $y\in \overline{B}$. Thus,
$v$ is surjective. If there exists some non-zero element $x\in B$ such that $x=u(x)$, then it contradicts with
$A\cap B=\{0\}$. Therefore, $v$ is injective. Hence, $v: B\rightarrow \overline{B}$ is a linear isomorphism of vector spaces.
Denote by $\widetilde{v}: B\rightarrow A\bowtie B$ the composition $\widetilde{v}: B\rightarrow \overline{B}\hookrightarrow^i E=A\bowtie B$.
Thus, $\widetilde{v}(x)=(-u(x),x)$ for all $x\in B$. Let $\varphi=-u$. Then, by the fact that $\overline{B}=\text{Im}(v)=\text{Im}(\widetilde{v})$ is
a left-symmetric (resp. Novikov) subalgebra of $E=A\bowtie B$, we get
\begin{eqnarray*}
&&[(\varphi(x),x),(\varphi(y),y)]\\
&=&(\varphi(x)\circ \varphi(y)+l_B(y)\varphi(x)+l_B(x)\varphi(y),x\circ y+l_A(\varphi(x))y+r_A(\varphi(y))x)\\
&=&(\varphi(z),z)
\end{eqnarray*}
for some $z\in B$. Here, $z=x\circ y+l_A(\varphi(x))y+r_A(\varphi(y))x$. Thus, $\varphi$ is a deformation map of the matched pair $(A,B,l_A,r_A,l_B,r_B)$ and $v:B_\varphi\rightarrow\overline{ B }$ is also a left-symmetric (resp. Novikov)  algebra morphism. Therefore, $\overline{B}\cong B_\varphi$.
\end{proof}

\begin{definition}
Let $(A,B,l_A,r_A,l_B,r_B)$ be a matched pair of left-symmetric (resp. Novikov) algebras. For two deformation maps
$\varphi$, $\psi: B\rightarrow A$, if there exists $\rho: B\rightarrow B$ a linear automorphism of $B$ such that
for any $x$, $y\in B$:
\begin{gather}
\rho(x\circ y)-\rho(x)\circ \rho(y)=l_A(\psi(\rho(x)))\rho(y)+r_A(\psi(\rho(y))\rho(x)\\
-\rho(l_A(\varphi(x))y)-\rho(r_A(\varphi(y)x)),\nonumber
\end{gather}
$\varphi$ and $\psi$ are called \emph{equivalent}. Denote it by $\varphi\sim \psi$.
\end{definition}

\begin{theorem}
Let $A$ be a left-symmetric (resp. Novikov) subalgebra of $E$, $B$ an $A$-complement of $E$ and
$(A, B, l_A, r_A, l_B, r_B)$ the associated matched pair. Then, $\sim $ is an equivalence relation on
the set $\mathcal{DM}(B,A|(l_A,r_A,l_B,r_B))$ and the map
\begin{eqnarray*}
\mathcal{HC}^2(B,A|(l_A,r_A,l_B,r_B)):=\mathcal{DM}(B,A|(l_A,r_A,l_B,r_B))/\sim \rightarrow \mathcal{F}(A,B), ~~~\overline{\varphi}\mapsto B_\varphi,
\end{eqnarray*}
is a bijection between $\mathcal{HC}^2(B,A|(l_A,r_A,l_B,r_B))$ and the isomorphism classes of all $A$-complements of $E$.
In particular, $[E:A]=|\mathcal{HC}^2(B,A|(l_A,r_A,l_B,r_B))|$.
\end{theorem}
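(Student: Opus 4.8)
The plan is to establish three things in turn: that $\sim$ is an equivalence relation on $\mathcal{DM}(B,A|(l_A,r_A,l_B,r_B))$; that the assignment $\overline{\varphi}\mapsto B_\varphi$ is well defined on equivalence classes and lands in $\mathcal{F}(A,E)$; and that it is a bijection. The final claim $[E:A]=|\mathcal{HC}^2(B,A|(l_A,r_A,l_B,r_B))|$ is then immediate from the definition $[E:A]:=|\mathcal{F}(A,E)|$ together with the bijection.

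\textbf{Step 1: $\sim$ is an equivalence relation.} Reflexivity is witnessed by $\rho=\mathrm{Id}_B$, for which the defining identity reduces to the trivial equality (after cancelling, both sides give $l_A(\varphi(x))y+r_A(\varphi(y))x$). For symmetry, if $\rho$ witnesses $\varphi\sim\psi$, I would check that $\rho^{-1}$ witnesses $\psi\sim\varphi$; concretely, apply $\rho^{-1}$ to the defining identity, substitute $x\mapsto\rho^{-1}(x)$, $y\mapsto\rho^{-1}(y)$, and rearrange, using that $\rho^{-1}$ is linear and bijective. For transitivity, if $\rho$ witnesses $\varphi\sim\psi$ and $\sigma$ witnesses $\psi\sim\chi$, then $\sigma\circ\rho$ should witness $\varphi\sim\chi$: one composes the two identities, using the identity for $\sigma$ with arguments $\rho(x),\rho(y)$ to rewrite $\sigma(\rho(x)\circ\rho(y))$ in terms of $\chi$. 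These are the same bookkeeping manipulations that appear for Lie and Leibniz algebras in \cite{AM4, AM6}, adapted to the (non-commutative, non-anticommutative) left-symmetric product; the only care needed is to keep the roles of $l_A$ and $r_A$ straight, since unlike the Lie case there are two actions rather than one.

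\textbf{Step 2: well-definedness and image.} By Theorem \ref{th2}(1), for every deformation map $\varphi$ the deformed object $B_\varphi$ is a left-symmetric (resp. Novikov) algebra and an $A$-complement of $E$, so $\overline{\varphi}\mapsto B_\varphi$ at least produces an element of $\mathcal{F}(A,E)$. It remains to see that $\varphi\sim\psi$ forces $B_\varphi\cong B_\psi$ as left-symmetric (resp. Novikov) algebras. Here I claim that the automorphism $\rho$ appearing in the definition of $\sim$ is precisely an algebra isomorphism $B_\varphi\to B_\psi$: indeed, unwinding the product $\circ_\varphi$ on $B_\varphi$ and $\circ_\psi$ on $B_\psi$ given by \eqref{f1}, the equation $\rho(x\circ_\varphi y)=\rho(x)\circ_\psi\rho(y)$ is, after expanding both $\circ_\varphi$ and $\circ_\psi$ via \eqref{f1}, exactly the defining identity of $\varphi\sim\psi$. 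So the map on quotients is well defined.

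\textbf{Step 3: bijectivity.} Surjectivity is Theorem \ref{th2}(2): every $A$-complement $\overline{B}$ of $E$ is isomorphic to $B_\varphi$ for some deformation map $\varphi$. For injectivity, suppose $B_\varphi\cong B_\psi$ as left-symmetric (resp. Novikov) algebras via some isomorphism; since both have the same underlying vector space $B$, this isomorphism is a linear automorphism $\rho$ of $B$, and the condition that $\rho$ be an algebra morphism $B_\varphi\to B_\psi$, once the products are expanded through \eqref{f1}, is literally the identity defining $\varphi\sim\psi$. Hence $B_\varphi\cong B_\psi$ implies $\varphi\sim\psi$, giving injectivity. I expect the main obstacle to be purely organizational rather than conceptual: carrying out Step 1 (especially transitivity) and the expansions in Steps 2--3 without sign or side errors, since the left-symmetric product is genuinely non-associative and the two module actions $l_A,r_A$ must be tracked separately; but no new idea beyond Theorem \ref{th2} is required, and the structure of the argument parallels the Lie/Leibniz cases of \cite{AM4, AM6} closely.
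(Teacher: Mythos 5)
Your proposal is correct and follows essentially the same route as the paper: the paper's (very terse) proof consists precisely of the observation that $\varphi\sim\psi$ holds if and only if $B_\varphi\cong B_\psi$ as left-symmetric (resp.\ Novikov) algebras --- which is what your Steps 2--3 establish by expanding $\circ_\varphi$, $\circ_\psi$ via \eqref{f1} --- combined with Theorem \ref{th2}. Your Step 1 just makes explicit (by direct verification, or equivalently by transporting reflexivity, symmetry and transitivity of algebra isomorphism through that characterization) what the paper leaves implicit, so there is no substantive difference.
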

\begin{proof}
It is easy to see that two deformation maps $\varphi$ and $\psi$ are equivalent if and only if the corresponding
left-symmetric (resp. Novikov) algebras $B_\varphi$ and $B_\psi$ are isomorphic. Then, by Theorem \ref{th2}, we obtain
this theorem.
\end{proof}

Finally, we provide a explicit example to compute $[E:A]$.
\begin{example}
Let $k$ be the field of real numbers $\mathbb{R}$ or the field of complex numbers $\mathbb{C}$.
Let $E$ be the 4-dimensional left-symmetric algebra with basis $\{e_1,e_2,e_3,e_4\}$ and the product
defined as follows (other products vanishing):
\begin{eqnarray}
e_1\circ e_3=e_3,~~e_2\circ e_2=2e_2,~~e_3\circ e_4=e_2,\\
e_1\circ e_4=-e_4,~~e_2\circ e_3=e_3,~~e_4\circ e_3=e_2,~~e_2\circ e_4=e_4.
\end{eqnarray}
This algebra can be referred to Example 3.34 in \cite{Bu1}.
Assume that $A$ and $B$ are the left-symmetric subalgebras of $E$ with basis $\{e_1,e_3\}$ and $\{e_2,e_4\}$ respectively.
Obviously, $B$ is a complement of $A$ in $E$. Therefore, $E$ is isomorphic to the bicrossed product corresponding to
the matched pair $(A,B,l_A,r_A,l_B,r_B)$ whose actions are given by (other actions vanishing)
\begin{eqnarray*}
l_A(e_1)e_4=-e_4,~~l_A(e_3)e_4=e_2,~~r_A(e_3)e_4=e_2,~~l_B(e_2)e_3=e_3.
\end{eqnarray*}

By some simple computations, we can obtain that any deformation map associated with the above matched pair of left-symmetric
algebras is of the following form:
\begin{eqnarray*}
\varphi_b: B\rightarrow A,~~~~~~~~~~~\varphi(e_2)=0,~~\varphi(e_4)=be_3,
\end{eqnarray*}
for some $b\in k$. Furthermore, the $\varphi_b$-deformation of $B$ has the following product:
\begin{eqnarray*}
e_2\circ_{\varphi_b}e_2=2e_2,~~e_2\circ_{\varphi_b}e_4=e_4,\\
e_4\circ_{\varphi_b}e_2=0,~~e_4\circ_{\varphi_b} e_4=2be_2.
\end{eqnarray*}

Next, we consider $[E:A]$ in two cases.\\

Assume $k=\mathbb{C}$. If $b=0$, $B_{\varphi_b}=B$. For $b\neq 0$, the left-symmetric algebra $B_{\varphi_b}$ is isomorphic
to $B_{\varphi_1}$ with the isomorphism $\rho: B_{\varphi_b}\rightarrow B_{\varphi_1}$ given by
$\rho(e_2)=e_2,$ $\rho(e_4)=\sqrt{b}e_4$. Moreover, it is obvious that $B_{\varphi_1}$ is not isomorphic to $B$.
Therefore, in this case, $[E:A]=2$.

Assume $k=\mathbb{R}$. If $b=0$, $B_{\varphi_b}=B$. If $b> 0$, the left-symmetric algebra $B_{\varphi_b}$ is isomorphic
to $B_{\varphi_1}$ with the isomorphism $\rho: B_{\varphi_b}\rightarrow B_{\varphi_1}$ given by
$\rho(e_2)=e_2,$ $\rho(e_4)=\sqrt{b}e_4$. Similarly, if $b< 0$, the left-symmetric algebra $B_{\varphi_b}$ is isomorphic
to $B_{\varphi_{-1}}$ with the isomorphism $\rho: B_{\varphi_b}\rightarrow B_{\varphi_{-1}}$ given by
$\rho(e_2)=e_2,$ $\rho(e_4)=\sqrt{-b}e_4$. Moreover, it can be checked that $B_{\varphi_1}$, $B$ and $B_{\varphi_{-1}}$
are not isomorphic to each other.
Therefore, in this case, $[E:A]=3$.

\end{example}

\end{document}